\pgfplotsset{compat=1.9}
\newcommand{\extension}[1]{\widehat{#1}}
\newcommand{\hH}{\left(\frac{h}{H}\right)}
\newcommand{\FH}{\mathcal{F}_H}
\newcommand{\Wl}{\mathcal{W}^\ell}
\newcommand{\Wk}{\mathcal{W}^k}
\newcommand{\cW}{\mathcal{W}}
\newcommand{\NE}{\mathcal{N}_E}
\newcommand{\NF}{\mathcal{N}_F}
\newcommand{\WF}{\mathcal{W}_F}
\newcommand{\YF}{\mathcal{Y}_F}
\newcommand{\Fl}{\mathcal{F}^\ell_H} 
\newcommand{\x}{x}
\newcommand{\normf}{\mathbf{n}_\face}
\newcommand{\normK}{\mathbf{n}_K}
\newcommand{\labeldofv}{$\face$.1)}
\newcommand{\labeldofe}{$\face$.2)}
\newcommand{\labeldoff}{$\face$.3)}
\newcommand{\labeldofK}{K)}
\newcommand{\canonical}{\varphi}
\newcommand{\wirebasket}{\Sigma}
\newcommand{\xpoints}{\Xi}
\newcommand{\vertex}{V}
\newcommand{\NV}{\mathcal{N}_\vertex}
\newcommand{\BasePoly}{\mathfrak{B}^\face}
\newcommand{\pjf}{p_j^\face}
\newcommand{\hBasePoly}{\widehat{\mathfrak{B}}}
\newcommand{\Poly}{\mathbb{P}}
\newcommand{\Lproj}{\Pi^0}
\newcommand{\tIf}{I_{\p-2}^\face}
\newcommand{\Harm}{\mathcal{H}}
\newcommand{\dof}[2]{\mathfrak{y}_#2(#1)}
\newcommand{\nodei}{y_i}
\newcommand{\dofi}{\mathfrak{y}_i}
\newcommand{\support}{\mathop{supp}}
\newcommand{\DeltaF}{\Delta^{\mathcal{F}}}
\newcommand{\DeltaV}{\Delta^{\mathcal{X}}}
\newcommand{\DeltaE}{\Delta^{\mathcal{E}}}
\newcommand{\CjumpE}{\tau_{E}}
\newcommand{\CjumpF}{\tau_{F}}
\newcommand{\CjumpEF}{\tau_{EF}}
\newcommand{\Nstar}{N_\star}
\newcommand{\NK}{N_K}
\newcommand{\gammastar}{\gamma_\star}
\newcommand{\gammaK}{\gamma_K}
\newcommand{\xf}{x_f}
\newcommand{\xK}{x_K}
\newcommand{\nref}[1]{(\ref{#1})}
\newcommand{\Th}{\mathcal{T}_h}
\newcommand{\Thk}{\mathcal{T}_h^\ell}
\newcommand{\EH}{\mathcal{E}_H}
\newcommand{\Yl}{\mathcal{Y}^\ell}
\newcommand{\Xl}{\mathcal{X}^\ell}
\newcommand{\cX}{\mathcal{X}}
\newcommand{\Ni}{\mathcal{N}_i}
\newcommand{\YE}{\mathcal{Y}_E}
\newcommand{\hW}{\widehat W_h}
\newcommand{\tW}{\widetilde W_h}
\newcommand{\hSop}{\mathcal{\widehat S}}
\newcommand{\tSop}{\mathcal{\widetilde S}}
\newcommand{\Rop}{\mathcal{R}}
\newcommand{\Bop}{\mathcal{B}}
\newcommand{\Lifthl}{\mathcal{L}_h^\ell}
\newcommand{\Lifth}{\mathcal{L}_h}
\renewcommand{\vec}[1]{\mathbf{#1}}
\newcommand{\p}{m}
\newcommand{\Bone}{{\mathbb{B}_\p}}
\newcommand{\Vfp}{V^{\face}_\p}
\newcommand{\VKp}{V^{K}_\p}
\newcommand{\tVfp}{\widetilde{V}^\face_\p}
\newcommand{\dimV}{N}
\newcommand{\Svem}{S_a^K}
\newcommand{\ED}{\mathcal E_D}
\newcommand{\FETI}{\mathcal{M}}
\newcommand{\IdtW}{\mathbf{1}_{\tW}}
\newcommand{\oml}{\Omega^\ell}
\newcommand{\normWh}[1]{\| #1 \|_{1/2,*}}
\newcommand{\snormWh}[1]{| #1 |_{1/2,*}}
\newcommand{\face}{f}
\newcommand{\Pinabla}{\Pi^\nabla_K}
\newcommand{\Pinablaf}{\Pi^\nabla_\face}
\newcommand{\Pscottzhang}{\Pi_{SZ}}
\newtheorem{problem}{Problem}[section]
\newtheorem{assumption}{Assumption}[section]
\theoremstyle{remark}
\newtheorem{remark}[theorem]{Remark}
\numberwithin{equation}{section}
\newcommand{\roundPrecision}{2}
\title{FETI-DP for the three-dimensional Virtual Element Method\thanks{
	Submitted December 17, 2018, revised February 27, 2020.
\funding{The work of the authors was realized in the framework of ERC Project CHANGE, which received funding from the European Research Council (ERC) under the European Union's Horizon 2020 research and innovation program grant 694515.
%
}}}
\author{Silvia Bertoluzza\thanks{Istituto di Matematica Applicata e Tecnologie Informatiche ``E. Magenes", IMATI-CNR, Pavia, PV, 27100, Italy 
  (\email{silvia.bertoluzza@imati.cnr.it}, \email{micol.pennacchio@imati.cnr.it}, \email{daniele.prada@imati.cnr.it} }
\and Micol Pennacchio\footnotemark[2]
\and Daniele Prada\footnotemark[2]
}
\newcommand{\rosa}[1]{{\color{black} #1}} 
\newcommand{\rosso}[1]{{\color{black} #1}} 
\begin{document}

\maketitle

\begin{textblock*}{14cm}(3.5cm,24.8cm) 
	\noindent
	\scriptsize{This is the final draft of the paper published in {\it SIAM J. Numer. Anal.} Vol. 58, No. 3, pp. 1556-1591 (2020),  doi: {10.1137/18M1233303}. The original publication is available at https://epubs.siam.org.
	}
\end{textblock*}
\begin{abstract} 
	We deal with the finite element tearing and interconnecting dual primal (FETI-DP)  preconditioner  for  elliptic problems discretized by the virtual element method (VEM).  
	We extend the result of \cite{FETI_VEM_2D} to the three dimensional case. We prove polylogarithmic condition number bounds, independent of the number of subdomains, the mesh size, and jumps in the diffusion coefficients. 
	Numerical experiments validate the theory. 
\end{abstract}

\begin{keywords}
  Virtual Element Method,  Domain decomposition methods, Substructuring preconditioners
\end{keywords}

\begin{AMS} 
65N30, 65N55
\end{AMS}


\section{Introduction}

Methods for the solution of PDEs based on polytopal meshes have recently attracted an increasing attention, mainly due to the necessity of tackling what is nowadays a bottleneck in the overall process of simulating real life phenomena, namely the task of mesh generation.
Several methods have been recently introduced which allow for quite general polygonal or polyhedral elements, such as 
mimetic finite differences \cite{book_mimetic,brezzi_mimetic}, discontinuous Galerkin-finite element method (DG-FEM) \cite{Rev_DG,Cangiani_hpDGVEM}, hybridizable and hybrid high-order methods
\cite{Cockburn_LDG,Ern_LDG}, weak galerkin method  \cite{Weak_FEM}, BEM-based FEM \cite{BEM_FEM}  and polygonal FEM \cite{pol_FEM} to name a few.

Here we deal with the virtual element method (VEM) \cite{basicVEM}, a discretization technique that can be
considered as an extension of the FEM 
 to polytopal tessellations. 
In such a method, 
local approximation spaces containing polynomial functions are defined and assembled in a global conforming approximation space,  but the explicit construction and integration of the associated shape functions are avoided, whence the name {\em virtual} \cite{basicVEM}.
The evaluation of the operators and matrices needed in the implementation of the method is  carried out by relying only on an implicit knowledge of the local shape functions,  as described in \cite{hitchVEM} (see also \cite{antonietti_p_VEM,beirao_hp,beirao_hp_exponential}, where the $p$ and $hp$ versions of the method are discussed and analyzed). 
Though introduced fairly recently, such a method 
 has already been applied and extended to  a wide variety of different model problems;
we recall applications to: parabolic problems \cite{beirao_parab}, Cahn-Hilliard, Stokes, Navier-Stokes and Helmholtz equations \cite{Antonietti_VEM_Stokes,Antonietti_VEM_Cahn,beirao_stokes,beirao_Navier_Stokes,perugia_Helmholtz}, linear and nonlinear elasticity problems \cite{beirao_elastic,beirao_linear_elasticity,VEM_3D_elasticity}, general elliptic problems in mixed form \cite{VEM_mixed}, fracture networks \cite{VEM_discrete_fracture}, Laplace-Beltrami equation \cite{VEM_Laplace_Beltrami}.

In this paper, we focus on the linear system of equations associated with the VEM discretization.
As happens in the case of finite elements, the  efficient solution of such a linear system is of paramount importance to fully exploit the potential of the method. Little work has been done on this issue up to now, all limited to the spatial dimension two. 
 	First works in the literature tackled the increase of 
the condition number appearing already at the level of the elementary stiffness matrix,  
due either to a degradation of the quality of the tessellation and/or to the increase in the polynomial order  of the method \cite{beirao_hp,dassi_mascotto_3DVEM,Mascotto_illcond,berrone_borio_17}.
If we consider rather  the increase of the condition number resulting from refining the discretization,  to the best of our knowledge the approaches considered up to now are  domain decomposition (\cite{Calvo_Schwarz,Calvo_MG,FETI_VEM_2D,Prada_enumath_17}) and multigrid (\cite{antonietti_p_VEM}, for $p$ refinement). 
In the present paper we extend to the three dimensional case the results obtained  in \cite{FETI_VEM_2D}. More precisely, 
 we focus on one of the most efficient preconditioning techniques: the dual-primal finite element tearing and interconnecting (FETI-DP) \rosso{\cite{Farhat:partI,Toselli.Widlund}}, a non overlapping domain decomposition method where the problem is reformulated as a constrained optimization problem and solved by iterating on the set of Lagrange multipliers representing the fluxes across the interface between the non overlapping subdomains.
The FETI-DP method has been already extensively studied in the context of many different discretization methods --  spectral elements  \cite{Pavarino2007.BDDC.FETIDP,klawonn2008spectral}, mortar discretizations \cite{KIM.mortar.FETIDP}, 
NURBS discretizations in isogeometric analysis \cite{Pavarino_iso_FETIDP}. 

Following the approach presented in \cite{FETI_VEM_2D} for two dimensional domains, which mainly relies on the properties of the trace of the discrete space on the interface of the domain decomposition, we prove that the properties of scalability, quasi-optimality and independence on the discontinuities of the elliptic operator coefficients across subdomain interfaces, that are known for the finite element case, still hold when dealing with VEM. 
More specifically, we show that the condition number of the preconditioned matrix is bounded by a constant times
the factor $
(1+\log(H/h))^2,
$
where $H$ and $h$ are, respectively, mesh-size of the subdomain decomposition and of the tessellation, see Theorem \ref{bound_FETI-DP}.
In order to do so, we need to prove several inequalities related to the VEM approximation space, by only relying on the implicit definition of the discrete functions, which, we recall, are not explicitly known.

We observe that, since we are in the framework of \cite{MandelSousedik},
the equivalence of the BDDC (balancing domain decomposition by constraint) and the FETI-DP preconditioners holds.
Therefore the bound for the condition number obtained here also yields an estimate on the BDDC preconditioner for VEM.

The paper is organized as follows. The basic notation,
functional setting and the description of the VEM 
are given
in Section~\ref{sec:VEM}. 
The dual-primal preconditioner is introduced and analyzed in Section~\ref{sec:dualprimal}, 
whereas some relevant properties of the virtual element (VE) discretization space, mainly used for the proof, are presented in 
Section~\ref{sec:VEMprop}. 
The analysis of the preconditioner, with the proof of the estimate for the condition number (Theorem \ref{bound_FETI-DP}), is carried out in Section \ref{sec:FETI_prec} where we also give some detail specific to its implementation in the VEM framework.
Numerical experiments that validate the theory are presented in Section~\ref{sec:numerical}.


\newcommand{\Vf}{\mathcal{V}^\face}
\newcommand{\VK}{\mathcal{V}^K}
\newcommand{\hatpjf}{{\widehat p}_j^f}

\section{The VEM}\label{sec:VEM}
We start by recalling the definition and the main properties of the virtual element method
\cite{3DVEM}. 
To fix the ideas, we focus on the following elliptic model  problem: 
\begin{equation}\label{strong}
-\nabla \cdot(\rho \nabla u) = g \ \text{ in } \Omega, \qquad u = 0 \ \text{ on } \partial \Omega,
\end{equation}
with $g \in L^2(\Omega)$,
where $\Omega \subset \mathbb{R}^3$ is (for simplicity) a convex polyhedron. We assume that the coefficient $\rho(x)$ is a scalar function of $x$ such that for almost all $x \in \Omega$,  $\alpha \leq \rho(x) \leq M$ for two constants $M \geq \alpha >0$.

\subsubsection*{The tessellation} The virtual element method looks for an approximation to the solution of \eqref{strong} in a conforming subspace of $H^1_0(\Omega)$ constructed on a polyhedral tessellation of $\Omega$.
Let us  then start by introducing the assumptions on the tessellation. We consider a family $\{ \Th \}_h$ of tessellations of $\Omega$ into a finite number of  polyhedra $K$, which we assume to be shape regular according to the following definition (quite standard in the theoretical study of VEM).
\begin{definition}\label{shape_regular} We say that a polyhedron $K$ is shape regular of diameter $h_K$ with constants $\NK>0$ and $\gamma_K >0$ if $K$  satisfies the following assumptions (\cite{Lipnikov}): 
	\begin{enumerate}
		\item $K$ has at most $\NK$ faces and $\NK$ edges;
				\item for   every face $\face$ and every edge $e$ we have: 
		\[
		\quad \gammaK h_K^3 \leq | K |, \qquad \gammaK h_K^2 \leq | \face |, \qquad \gammaK h_K \leq | e |;
		\]
		\item for each face $\face$, there exists a point $\xf \in \face$ such that $\face$ is star-shaped with respect to every point in the disk of radius $\gammaK h_K$ centered at $\xf$;
		\item there exists a point $\xK$ such that $K$ is star-shaped with respect to every point in the sphere of radius $\gammaK h_K$ centered at $\xK$;		
		\item   for every face $\face$ of $K$,  there  exists a pyramid  contained in $K$ such that its base equals to $\face$, its height equals  $\gammaK h_K$ and the projection of its vertex onto $\face$ is $\xf$.
	\end{enumerate}
\end{definition}
{We recall (\cite{Lipnikov}) that  $K$ is shape regular if and only if it admits a conformal decomposition  that is made of less than $\NK$ shape regular tetrahedra and includes all its vertices.
}

\

We assume the tessellation $\Th$ to be quasi-uniform and uniformly shape regular. More precisely we make the following assumption.
\begin{assumption}\label{ass:tessellation} We assume that there exist two constants $\Nstar$ and $\gammastar$ such that the tessellation $\Th$ verifies the following assumptions:
	\begin{enumerate}
	
		 		\item {$\Th$ is geometrically conforming, that is for each $K,K'\in \Th$,  if a vertex,  edge  or  face of $K$ is contained in $K \cap K'$,  it is also, respectively, a vertex,  edge, or face of $K'$;} 	
		\item all $K \in \Th $ are shape regular of diameter $h_K$ with constants $\gammaK \geq \gammastar$ and $\NK \leq \Nstar$; 
		\item the tessellation is quasi uniform, that is there exists an $h$ such that for all $K \in \Th $, $h_K \simeq h$.
			\end{enumerate}
	
\end{assumption}

As we are interested here in explicitly studying the dependence of the estimates that we are going to
prove on the number and size of the subdomains and the number and size of the elements of the tessellations, throughout the paper we will employ the notation
$A \lesssim B$ (resp. $A \gtrsim B$) to say that the quantity $A$ is bounded from above (resp. from below)
by $cB$, with a constant $c$ independent of the diffusion coefficient $\rho$ in the PDE (and in particular, independent of $M$, $\alpha$, and of their jump across the interface of the decomposition), that can depend on the polynomial order $\p$ of the VEM method, and depending on the tessellation and on the domain decomposition only via the  constants in  Assumptions \ref{ass:tessellation}  and \ref{ass:macroel}.   
The expression $A \simeq B$ will stand for $ A \lesssim B \lesssim A$.

\subsubsection*{The local face space}  The order $\p$  virtual element discretization space on $\Th$ is
defined element by element starting from the edges of the tessellation, where the discrete functions are defined as degree $\p$ polynomials, then subsequently defining it on the faces and in the interior of the polyhedra. As the space on the faces, and the degrees of freedom  identifying the elements of such a space, will play a key role in the definition and analysis of the FETI preconditioner, it is worth  recalling  its definition in some detail.

 On the boundary of each face  $f$ we  introduce the space: 
\begin{gather*}
\Bone(\partial \face) = 
\left\{g\in C^0(\partial \face) :  g_{|{e}}\in \mathbb{P}_\p(e) \text{ for all edge } e \subseteq \partial \face 
\right\},
\end{gather*}
where, for any one, two or three dimensional domain $D$, $\mathbb{P}_\p(D)$ denotes the set of order  $\p$ polynomials on $D$. 
 In order to define the discrete face space $V^\face$, $\face$ being a face of one of the polyhedra of the tessellation, we start by introducing an auxiliary space  $\tVfp$, defined   as 
\begin{gather}\label{defspace1}
\tVfp =  \{ v \in C^0(f):\, v|_{
	\partial \face} \in\Bone(\partial \face),\, 
\Delta v \in \mathbb{P}_\p(\face) \}.
\end{gather}
It is known \cite{basicVEM} that an element in $\tVfp$ is uniquely identified once its trace on $\partial \face$ and its moments up to order $\p$ (or, equivalently, the  scalar products with the elements of a basis $\BasePoly_\p$ for the space $\Poly_\p(f)$) are known. 

\

The local face space $\Vfp$ is a suitable subspace of $\tVfp$. In order to define it, we introduce the projection $\Pinablaf: H^1(\face) \to \Poly_\p(\face)$, orthogonal to the scalar product
\begin{equation}\label{scalar_prod}
(v,w)_{1,f} = \int_\face \nabla v(x) \cdot \nabla w(x) + \bar v^\face \bar w^\face, \qquad\text{where, for all $w\in V_h$},\ \bar w^\face  = \int_{\partial \face} w.
\end{equation}
To define $\Vfp$ we choose two bases $\BasePoly_{\p - 2} \subset \BasePoly_{\p}$ for the two embedded subspaces $\Poly_{\p-2}(\face) \subset \Poly_\p(\face)$ of polynomials of order less than or equal to, respectively, $\p-2$ and $\p$ (where, for $\p=1$ we write conventionally $\Poly_{-1}(\face)=\{0\}$):
\[
\BasePoly_{\p-2} = \{ \pjf ,\ j=1,\cdots,N_{\p-2}\}, \qquad \BasePoly_\p = \BasePoly_{\p-2} \cup \{ \pjf , \ j=N_{\p-2}+1, \cdots, N_\p\},
\]
where, for $\p  \geq -1$, $N_\p$ denotes the dimension of the space $\Poly_\p(\face)$. We assume that $\BasePoly_\p$ is a suitably scaled Riesz basis. More precisely, we make the following assumption.
\begin{assumption}\label{assumption:Rieszbasis}
For all polynomial $q \in \Poly_\p$ it holds that
\begin{equation}\label{polyriesz1}
\int_\face | q |^2_{0,\face} \simeq h^2 \sum_{j=1}^{N_\p} | \int q \pjf  |^2.
\end{equation}
\end{assumption}

We then define $\Vfp$ as
\begin{gather}\label{defspacek}
\Vfp =  \{ v \in \tVfp:\  \int_\face   v \pjf  = \int_\face  (\Pinablaf v) \pjf ,\  j = N_{\p-2} +1, \cdots, N_\p\}.
\end{gather}
We immediately see that, as $\Pinablaf$ is a projector, for all polynomials $q \in \Poly_\p$, $\Pinablaf q = q$, and hence, as $\Delta q \in \Poly_{\p-2}(\face) \subset \Poly_\p(\face)$ and $q|_{\partial\face} \in \Bone(\face)$, it holds that $\Poly_\p(\face) \subset \Vfp$. 

It is also easy to see that a function $v$ in $\Vfp$ is uniquely determined by 
\begin{enumerate}
	\item[\labeldofv] the values of $v$ at the vertices of $\face$ (\emph{vertex degrees of freedom});
	\item[\labeldofe] the values of $v$ at the $\p - 1$ interior nodes of the  $\p + 1$ points Gauss-Lobatto integration rule (\emph{edge degrees of freedom});
	\item[\labeldoff] the  value of the scalar products  (\emph{face degrees of freedom}) 
	\[
	\int_\face v \pjf ,\quad \text{for all $\pjf  \in \BasePoly_{\p - 2}$} .
	\]
\end{enumerate}
In fact, the degrees of freedom \labeldofv-\labeldofe\ uniquely determine the trace of $v$ on $\partial \face$, which is an element of $\Bone(\partial \face)$. Moreover, the values of \labeldofv--\labeldoff\ uniquely determine $\Pinablaf v$, as they are sufficient to compute $(v,q)_{1,\face}$ for all $q \in \Poly_\p$.
Indeed \cite{basicVEM}, for $q \in \mathbb{P}_\p(K)$, Green's formula yields
\begin{equation}\label{Greenface}
\int_\face \nabla v(x) \cdot \nabla q(x) \,dx = \int_{\partial \face }v(s) \nabla q(s) \cdot \mathbf{n}_\face(s) \,ds  - \int_\face v(x) \Delta q(x)\,dx,
\end{equation}
where $\normf$ is the outer unit normal to $\face$.
The values of \labeldofv--\labeldoff\ give access to $v$ on $\partial \face$ and to its $L^2(\face)$ scalar product with any polynomial of order up to $\p-2$, thus allowing us to compute the right hand side of \eqref{Greenface}, as $\Delta q \in \Poly_{\p-2}$. In view of the definition of $\Vfp$,
this implies, among other things, that the knowledge of the values of $\face$.1)--$\face$.3) allows us to compute exactly the $L^2(\face)$ scalar product of a function in $\Vfp$ times any polynomial of order less than or equal to $\p$.

\

 Observe that the space $\Vfp$ does not depend on the choice of the basis functions in $\BasePoly_{\p - 2}$, but it does depend on the choice of the basis functions in  $\BasePoly_\p \setminus \BasePoly_{\p - 2}$. Usually, the $\pjf $, $j = N_{\p-2}+1, \cdots, N_{\p}$, are chosen to be the scaled monomials of order greater than $\p-2$ and less than or equal to $\p$, though other choices are possible (and even advisable, for $\p >> 1$).

\

\begin{remark} Requiring that Assumption \ref{assumption:Rieszbasis} holds is equivalent to requiring that for all vectors $(c_j) \in \mathbb{R}^{N_\p}$ it holds that
	\[
	\int_\face | \sum_j c_j \pjf  |^2 \simeq h^{-2} \sum_j | c_j |^2 
	\]
	(see \cite{Christensen}).
	It is a quite natural assumption,  satisfied by many possible choices of $\BasePoly_\p$. Of course, \eqref{polyriesz1} holds for any suitably normalized orthogonal basis for $\Poly_\p(\face)$.
	In \cite{Chen2018}, the authors prove that it is indeed satisfied for the basis of rescaled monomials $m_\alpha$, $\alpha = (\alpha_1,\alpha_2) \in \mathbb{N}^2$, $\alpha_1+\alpha_2 \leq \p$, where we set
	\[
	m_\alpha(\x) = h^{-2}
	\left(
	\frac{ \x - \x_f }
	{h}	
	\right)^\alpha \quad \text {with, for $x = (x_1,x_2)$, $\alpha = (\alpha_1,\alpha_2)$,}\quad \x^\alpha = x_1^{\alpha_1} x_2^{\alpha_2}.
	\]
	A similar argument shows that such a property is satisfied by the rescaled version of any basis $\hBasePoly = \{
	 \hatpjf , \ j = 1,\cdots,N_\p\}$  for $\mathbb{P}_\p$
	\[
	\pjf  (x) = h^{-2} \hatpjf \left(
	\frac{ \x - \x_f }
	{h}
	\right).
	\]
\end{remark}

\begin{remark} 
	
	For the sake of simplicity, the basis functions $\pjf $ are assumed to be normalized in such a way that \eqref{polyriesz1} holds rather than the maybe more natural
\begin{equation}
\int_\face | q |^2_{0,\face} \simeq \sum_{j=1}^{N_\p} | \int q \pjf  |^2 
\end{equation}
(that would hold, for instance, in the case of orthonormal bases).
Remark, however, that the normalization of the degrees of freedom is transparent to the action of the preconditioner, so that both the design of the preconditioner itself and the theoretical bounds on the resulting condition number turn out not to depend on it. \end{remark}



\subsubsection*{The element space and the global space} In order to build the discrete space on a polyhedron  $K$, we assemble the local face spaces $\Vfp$ to build a local space defined on the boundary of $K$:
$$ \Bone(\partial K)  = 
\left\{g\in C^0(\partial K) : g_{|f}\in \Vfp \text{ for all face } \face \subseteq \partial K 
\right\}.
$$
The local space on $K$ is finally defined as 
\begin{equation}\label{defspace2} \VKp: = 
\left\{v\in C^0(K) :  v_{|\partial K}\in \Bone(\partial K)  \text{ and } \Delta v\in \mathbb{P}_{\p-2}(K)
\right\}.
\end{equation}
As $\Bone(\partial K)$ contains the subspace of continuous piecewise polynomials of order up to $\p$, it is immediate to see that $\VKp$ contains the polynomials of order up to $\p$. 

\

A function in $\VKp$ is uniquely determined by the value of the vertex, edge and face degrees of freedom $\face$.1) -- $\face$.3) corresponding to all the faces of $K$, plus
\medskip
\begin{enumerate}
	\item[($K$)] the values of the moments of $g$ in $K$, up to order $\p-2$ (\emph{interior degrees of freedom}).
	\end{enumerate}
\medskip
Such degrees of freedom are sufficient to compute the action of the projection $\Pinabla: V_h \to \Poly_\p(K)$, orthogonal to the scalar product
\[
(w,v)_{1,K} = \int_K \nabla w \cdot \nabla v + \bar w^K \bar v^K, \qquad \text{ where } \bar w = \int_{\partial K} w. 
\]
In fact, for $q \in \Poly_\p$, integrating by parts we obtain
\begin{equation}\label{Green3D}
\int_K \nabla w \cdot \nabla q = - \int_K w \Delta q + \int_{\partial K} w \nabla q\cdot \normK,
\end{equation}
where $\normK$ is the outer unit normal to $K$. The right hand side can be computed since, on the one hand, $\Delta q$ is in $\Poly_{\p-2}(K)$ and, on the other hand, since $\nabla q \cdot \normK$ is in $\Poly_{\p-1}(\face)$ for all faces $\face \subset \partial K$,  as observed in the previous section, the degrees of freedom $\face.1$--$\face.3$ enable us to compute the second integral on the right hand side.

\

We finally define the global virtual element space on $\Omega$ by gluing the local spaces:
$$ V_h  = 
\left\{V \in C^0(\Omega) : v_{|K}\in \VKp\text{ for all } K  \in \Th
\right\}.
$$

\begin{remark}
	The definition of $\Vfp$ can appear quite cumbersome, and one might think that there are simpler options for defining the space $\Vfp$. Two possibilities come to mind. The first one is to define $\Vfp = \widehat V^\face_{\p}$ with
	\[
	\widehat V^\face_{\p} = \{
	v \in C^0(f):\, v|_{
		\partial \face} \in\Bone(\partial \face),\, 
	\Delta v \in \mathbb{P}_{\p - 2}(\face)	
	\},
	\]
	as is done in the definition of the simplest form of the two dimensional virtual element method. However, if we define $\Vfp$ in this way, the knowledge of the values of the degrees of freedom \labeldofv--\labeldoff\ would not be sufficient to compute the $L^2(\face)$ scalar product of a function in $\Vfp$ with an order $\p - 1$ polynomial. This is needed to evaluate the projector $\Pinabla$, which, as we will see, is necessary to build the discretization of the PDE 
	that we want to solve. The second possibility would be to let $\Vfp = \tVfp$. This choice would indeed allow the computation of $\Pinabla$ but it would lead to a dramatic increase in the number of degrees of freedom (an extra $N_\p - N_{\p-2}$ per face), without any relevant increase in the order of approximation.
\end{remark}

\subsubsection*{The global degrees of freedom} 
It is immediate to see that a function in $V_h$ is uniquely determined by the values of the degrees of freedom \labeldofv--\labeldoff~ and \labeldofK~ for all vertices, edges, faces, and polyhedra of the tessellation. Requiring the continuity of $v_h$ reduces to asking that all \labeldofv, \labeldofe\  and \labeldoff\ are single valued, that is, that they produce the same result when evaluated on the restriction of $v_h$ to two polyhedra sharing, respectively, a vertex, edge or face.

\

It will be convenient, in the following, to explicitly introduce  functionals $\dofi: V_h \to \mathbb{R}$, $i = 1,\cdots,\dimV$ with $\dimV$ denoting the dimension of the global space $V_h$, mapping the elements of $V_h$ to the value of the corresponding $i$-th degree of freedom. In other words,  
{a function $w_h \in V_h$ is uniquely determined by the vector  $(\dofi(w_h))_{i=1}^N$ where $\dofi: V_h \to \mathbb{R}$ is either one of the following:
	\begin{enumerate}
		\item $\dofi(w_h)$ is the value of $w_h$ at one of the vertices of the tessellation $\Th$ (vertex degrees of freedom);
		\item $\dofi(w_h)$ is the value of $w_h$ at one of the $\p - 1$ interior nodes of the  $\p + 1$ points Gauss-Lobatto quadrature formula (edge degrees of freedom);
		\item $\dofi(w_h)$ is the scalar product $\int_\face w_h \pjf $, $\pjf  \in \BasePoly_{\p -2}(\face)$, with $\face$ being one of the faces of the tessellation (face degrees of freedom);
		\item $\dofi(w_h)$ is one of the moments of order up to $\p-2$ of $w_h$ over one of the elements $K$ (interior degrees of freedom).
	\end{enumerate}	
	Note that edges and vertex degrees  of freedom are ``nodal'', that is they correspond to a node, that we will denote $\nodei$.
	
}

\

\newcommand{\annihilator}{\mathcal{Z}}
\newcommand{\subsetomega}{\omega}

{
We let $\Upsilon$ denote the set of all the degrees  of freedom of $V_h$,
\[\Upsilon = \{ \dofi,\ i=1,\cdots,N \} = \text{ set of degrees of freedom of $V_h$}. \]  
	We recall that for $\dofi \in \Upsilon$  we can define its support as follows \cite{book_distributions}:
	\[
\support(\dofi) = \Omega \setminus\mathcal{Z}(\dofi),
	\]
	where the \emph{annihilator} $\annihilator(\dofi)$ is the largest open subset of $\Omega$ where $\dofi$ vanishes (where, by definition, a distribution $\dofi$ vanishes on an open subset   $\subsetomega$ of $\Omega$, 
	if and only if $\dofi(w) = 0$ for all $w \in C^{\infty}_0(\subsetomega)$).
	It is easy to see that the support of a vertex or edge degree of freedom is the corresponding node, whereas the support of a face degree of freedom is the closure of the corresponding face, and the support of an interior degree of freedom is the closure of the corresponding element. Observe that 
\[
w_h = 0 \ \text{ on } \support(\dofi) \quad \text{ implies } \quad \dofi(w_h) = 0.
\]

	\begin{remark}\label{remarkintdofs}
		Also for the interior degrees of freedom $K$) we could consider a basis for the space $\Poly_{\p-2}(K)$ different from the monomial basis. For $\p$ increasing this can help to increase the stability of the method (and in particular it allows better constants in the stabilization bound  \eqref{2}). Remark, however, that the choice of such degrees of freedom does not directly enter in the definition of the preconditioner, as they are locally eliminated as a preliminary step, and it affects the theoretical analysis only through such constants. 
	\end{remark}
	
	\begin{remark} It is not difficult to realize that the value of a function  $w_h \in V_h$ at any vertex, edge or face of the tessellation is uniquely determined by the values $\dofi(w_h)$ for $\dofi$ supported, respectively, in the vertex, in the closure of the edge and in the closure of the face. This is trivial for the values at the vertices, but is easily checked also for the value at the edges (a polynomial of degree $\p$ is, of course uniquely determined by its value at nodes of the $\p+1$ point Gauss-Lobatto integration rule) and on the faces (uniquely determined by the  values of \labeldofv--\labeldoff, as observed previously when introducing the local face space).
		Consequently, if $D \subset \Omega$ is obtained as the union of some vertices, edges and/or faces, $w_h|_D$ is uniquely determined by those degrees of freedom $\dofi(w_h)$ such that $\support(\dofi)\subseteq \bar D$. 
\end{remark}
}

\

\subsection*{The VE Discretization}
The Virtual Element Method stems from a Galerkin discretization of problem \eqref{strong}, starting from its variational formulation, which reads
\begin{equation}\label{variational}
\left\{
\begin{array}{l}
\text{find $u \in V:=H^1_0(\Omega)$ such that}\\[1mm]
a(u,v) = (g, v ) \quad \forall v \in V 
\end{array}\right.
\end{equation}
with
\[
a(u,v) = \int_{\Omega}\rho(x) \nabla u(x)\cdot \nabla v(x)\,dx, \qquad (g,v) = \int_{\Omega}g(x) v(x)\,dx.
\]

\

As the functions in $V_h$  are not known in closed form,  it is not possible to directly evaluate the bilinear form $a$ on two of such functions (this would imply solving Poisson  equations on each face and in each element). Clearly we have
\begin{equation}\label{1}
a^{K}(u,v) = a^K(\Pinabla u,\Pinabla v) + a^K(u - \Pinabla u, v - \Pinabla v)
\end{equation}
with $a^{K}(\cdot,\cdot)$ local counterpart of $a(\cdot,\cdot)$. 
The virtual element method is obtained by replacing the operator $a^K$ in the second term  on the right hand side, which cannot be computed exactly, with an ``equivalent'' computable operator $\Svem$. We then define
\[
a^{K}_h(u,v) = a^K(\Pinabla u,\Pinabla v) + \Svem(u - \Pinabla u, v - \Pinabla v),
\]
where $\Svem$ is any continuous symmetric bilinear form satisfying
\begin{equation}\label{2}
a^K(v,v) \simeq \Svem(v,v)\quad \forall v \in \VKp\ \text{ with } \Pinabla v=0.
\end{equation}
Equations (\ref{1}) and (\ref{2}) immediately yield
\begin{equation}\label{elemequiv}
a^K(v,v) \simeq a^K_h(v,v)\quad \forall v \in \VKp\quad \text{ and }\quad  a^K(v,w) = a^K_h(v,w)  \text{ whenever } v  \text{ or } w \in \mathbb{P}_\p(K) .
\end{equation}

Finally, we let $a_h : V_h \times V_h \to \mathbb{R}$ be defined by
\[
a_h(u_h,v_h) = \sum_K a_h^K(u_h,v_h),
\]
and we consider  the following discrete problem: 
\begin{problem}\label{discrete_full} Find $u_h \in V_h$ such that
	\[  
	a_h(u_h,v_h) =  \sum_K \int_K g^K_h v_h
	\qquad  \forall v_h \in V_h,
	\]
	where we let $g_h^K \in \Poly_\p(K)$ denote the $L^2(K)$ projection of $g|_K$ onto the space of polynomials of order less than or equal to $\p$. 
\end{problem}

Different choices of the stabilization bilinear forms $\Svem$ are proposed in the literature. The simplest, widely used choice, is directly expressed in terms of the degrees of freedom as the suitably scaled euclidean scalar product \cite{highorderVEM_3D} 
\begin{equation}\label{stabform1}
\Svem(w,v) = \rho_K h\sum_{i : \support(\dofi) \subseteq \bar K} \dofi(w) \dofi(v).
\end{equation}
An alternative which gives sharper bounds in \eqref{2} as $\p$ increases is the so called ``diagonal recipe'':
\begin{equation}\label{stabform2}
\Svem(w,v) = \sum_{i : \support(\dofi) \subseteq \bar K} \max\{ \rho_K h, a^K(\Pinabla \canonical_i,\Pinabla \canonical_i) \} \dofi(w) \dofi(v) 
\end{equation}
where $\canonical_i \in V_h$ is the canonical basis function corresponding to the degree of freedom $\dofi$, that is the unique function in $V_h$ such that $\dof { \canonical_i } j = \delta_{i,j}$, 
$\delta_{i,j}$ denoting the Kronecker delta. It has also been proposed to replace the sum on the right hand side of \eqref{stabform1} or \eqref{stabform2} by a sum over those $i$ such that $\support(\dofi) \subseteq \partial K$. Other recipes (proposed in two dimensions, but also valid in three dimensions), include suitably scaled versions of the $L^2(\partial K)$ scalar product, and of the bilinear form corresponding to the Laplace-Beltrami operator on $\partial K$ \cite{beirao_stab}.

\

For the study of the convergence, stability and robustness properties of the method we refer to~\cite{basicVEM,beirao_var_coef,Brenner_VEM_estimates}. 

\begin{remark}\label{remarkbarw}
	Different alternatives have been proposed for defining the constant components $\bar w^\face$ and $\bar w^K$ entering the definition of the scalar products $(\cdot,\cdot)_{1,\face}$ and $(\cdot,\cdot)_{1,K}$  and, consequently, of the projectors $\Pinablaf$ and $\Pinabla$. In particular, for $\p = 1$ we can consider
	\[
	\bar w^\face = \sum_{V \text{ vertex of }\face} w(V), \qquad  \bar w^K = \sum_{V \text{ vertex of }K} w(K),
	\]
	while for $\p > 1$ we can set
		\[
		\bar w^\face = \int_\face w, \qquad  \bar w^K = \int_K w.
		\]
		Remark that the choice of $\bar w^K$ does not have any effect on the matrix resulting from  discretizing the bilinear form, as, in its definition, the constant component of $\Pinabla w$ and $\Pinabla v$ are canceled by the $\nabla$ operator.
		On the contrary, the choice of $\bar w^\face$ does have a direct influence on the definition of the $L^2(\face)$ projection of $w$ onto $\Poly_\p(\face)$, which enters the definition of $ \nabla \Pinabla w$ through the second term on the right hand side of \eqref{Green3D}.
	\end{remark}


\section{Some relevant bounds}\label{sec:VEMprop}
In this section we present some bounds that will play a role in the forthcoming analysis. More precisely, letting $K \subseteq  \mathbb{R}^3$ be a shape regular polyhedron of diameter $h$, and letting $\face$ be any face of $K$, we have the following bounds.

\subsection*{Agmon inequality}
For all functions in $H^1(\face)$, it holds that  \cite{beirao_stab,Lipnikov}
\begin{equation}\label{agmon}
\int_{\partial \face} | u |^2 \lesssim  h^{-1} \int_\face | u |^2 + h \int_{\face} | \nabla u |^2. 
\end{equation}

\subsection*{Inverse estimates}  The following two inverse inequalities hold for all $v_h \in \Vfp$ \cite{beirao_stab,Cangianietal17,Chen2018,vacca_Darcy}:
 \begin{eqnarray}\label{inverse1}
 \int_\face | \Delta v_h |^2 &\lesssim& h^{-2} \int_\face | \nabla v_h |^2,\\
 \int_\face | \nabla v_h |^2 &\lesssim& h^{-2} \int_\face | v_h |^2.\label{inverse2}
 \end{eqnarray}

\subsection*{Riesz basis property} 
The following lemma, related to the equivalence between the $L^2(\face)$ norm of a function in $\Vfp$ and the euclidean norm of the vector of its degrees of freedom, holds.
\begin{lemma}\label{lem:Riesz} Let $\face$ be a face of the tessellation. For all $v_h \in \Vfp$ we have that
	\begin{equation}\label{eq:Riesz}
	\int_\face | v_h |^2 \simeq h^2 \sum_{i: \support(\dofi) \subseteq \bar\face} | \dofi(v_h) |^2.	\end{equation}
\end{lemma}

Bound \eqref{eq:Riesz} has been proven in \cite{Chen2018} with a different choice of the edge degrees of freedom (namely, the moments of order up to $k-2$) and for $\BasePoly_{\p}$ being the basis of the rescaled monomials.

 It is not difficult to verify that the proof therein still holds, with some minor changes, for our choice of degrees of freedom, provided Assumption \ref{assumption:Rieszbasis} holds, and provided that for $w \in \Bone(\partial \face)$  we have
\begin{equation}\label{riesz1d}
\int_{\partial\face} | w |^2 \simeq h \sum_{i: \support(\dofi) \subset \partial \face} | \dofi(w) |^2  = h \sum_{i: \support(\dofi) \subset \partial \face} | w(y_i) |^2, 
\end{equation}
where we recall that, for $i: \support(\dofi) \subset \partial \face$, $y_i$ denotes the node corresponding to the degree of freedom, which can be either a vertex of $\face$ or one of the $\p-1$ nodes of the  $\p+1$ points Gauss-Lobatto quadrature formula on an edge of $\face$. 
The norm equivalence \eqref{riesz1d} does indeed hold with our choice of degrees of freedom, and can be proven, using, edge by edge, a scaling argument, and the equivalence of the $L^2$ norm of a polynomial in $\Poly_\p(0,1)$ and the $\ell^2$ norm of the vector of its values at the nodes of the  $\p+1$ points Gauss-Lobatto quadrature formula. 

\

For the sake of completeness let us sketch the main steps of a proof of Lemma \ref{lem:Riesz}, which is slightly different from the one proposed in \cite{Chen2018}, yielding a sharper bound for suitable choices of the face degrees of freedom (such as, for instance, when $\BasePoly_\p$ is chosen to be an $L^2(\face)$ orthogonal basis for $\Poly_\p(\face)$).	

\

Letting $\Lproj_\p: L^2(\face) \to \Poly_\p(\face)$ denote the $L^2(\face)$ orthogonal projection,  and combining \eqref{polyriesz1}  with the definition of orthogonal projection, we can easily see that for all $w \in L^2(\face)$ we have
	\begin{equation}\label{boundl2proj}
	\int_\face | \Lproj_\p w |^2 \simeq  h^2 \sum_{j=1}^{N_\p} | \int_\face \Lproj_{\p} w \pjf  |^2 =  h^2 \sum_{j=1}^{N_\p} | \int_\face w \pjf  |^2.	\end{equation}
		
Then, proving that the right hand side of \eqref{eq:Riesz} is bounded by a constant times the left hand side is not difficult. 	In fact we have that%
		\[
h^2	\sum_{i: \support(\dofi) \subseteq \bar\face} | \dofi(v_h) |^2 =  h^2	\sum_{i: \support(\dofi) \subseteq \partial\face} | \dofi(v_h) |^2 + h^2	\sum_{j = 1}^{N_{\p-2}} | \int_\face w \pjf  |^2.
	\]
	We can bound the first term on the right hand side by combining \eqref{riesz1d}, \eqref{agmon}, and \eqref{inverse2} and the second term 
	by using \eqref{boundl2proj} and the boundedness of the $L^2(\face)$ orthogonal projection.

	\

To prove the converse inequality, as in the proof of \cite{Chen2018}, combining \eqref{riesz1d} with the maximum principle, it is not difficult to prove the following proposition.
\begin{proposition}\label{lem:rieszharmonic} Let $w$ verify $w|_{\partial \face} \in \Bone(\partial \face)$ and $\Delta w = 0$ in $\face$. Then
	\begin{equation}\label{rieszHarm}
\int_\face | w |^2 \simeq h^2 \sum_{i: \support(\dofi) \subset \partial \face} | \dofi(w) |^2.
\end{equation}
\end{proposition}

Additionally, we have the following proposition.
\begin{proposition}\label{lem:rieszinterior} For all $v \in V^0_\face = \{
	v \in H^1_0(\face) : -\Delta v \in \Poly_{\p}(\face)
	\}$ it holds that
	\begin{equation*} \label{rieszint}
	\int_\face | v |^2 \lesssim \int_\face | \Lproj_{\p} v |^2.
	\end{equation*}
\end{proposition}

\begin{proof} Using the Poincar\'e inequality, integrating by parts, applying a Cauchy--Schwartz inequality, and the inverse bounds \eqref{inverse2} and \eqref{inverse1} we obtain
	\begin{gather*} \int_\face | v |^2 \lesssim h^2
 \int_\face | \nabla v |^2 = - h^2 \int_\face v \Delta v = - h^2 \int_\face \Lproj_{\p} v \Delta v
 \lesssim  \sqrt{\int_\face | \Lproj_{\p} v |^2} \sqrt{\int_\face | v |^2}.	\end{gather*}
We conclude by dividing both sides by $\sqrt{\int_\face | v |^2}$.
	\end{proof}

 Letting now $\Harm w$ denote the harmonic lifting of $w|_{\partial \face}$ for all $w \in \tVfp$  we then have
\begin{multline*}
\int_\face | w |^2 \lesssim \int_\face|w - \Harm w|^2 + \int_\face |\Harm w |^2 \lesssim  \int_\face|\Lproj_\p(w - \Harm w)|^2 + \int_\face |\Harm w |^2\\ \lesssim  \int_\face| \Lproj_\p w |^2 + \int_\face |\Harm w |^2
\lesssim  h^2 \sum_{j=1}^{N_\p} | \int_\face w \pjf  |^2 + h^2 \sum_{i: \support(\dofi) \subset \partial \face} | \dofi(w) |^2,
\end{multline*}
where we  used Proposition \ref{lem:rieszinterior}, \eqref{boundl2proj},  and Proposition \ref{lem:rieszharmonic}.
We immediately see that if $w \in \tVfp$ is orthogonal to $\pjf $, $j = N_{\p-2}+1, \cdots, N_\p$, we have that
\begin{equation}\label{boundnonenhanced}
\int_\face | w |^2 \lesssim h^2 \sum_{i: \support(\dofi) \subseteq \bar\face} | \dofi( w ) |^2.
\end{equation}
Let now $\tIf w$ denote the function in $\tVfp$ orthogonal to $\pjf $ for $j = N_{\p-2}+1, \cdots, N_\p$, and verify \[
\Lproj_{\p-2}( \tIf w - w )= 0, \qquad (\tIf w - w)|_{\partial\face} = 0.\] We  easily see that, given any $w \in \tVfp$, we have that $\Pinablaf w = \Pinablaf \tIf w$. Then, for $w \in \Vfp$ we can write
\begin{equation}\label{stellina}
\int_\face | w |^2 \lesssim h^2 \sum_{i: \support(\dofi) \subset \partial \face} | \dofi(w) |^2 + h^2 \sum_{j=1}^{N_{\p-2}} | \int_\face w \pjf  |^2 +  h^2 \sum_{j=N_{\p-2} + 1}^{N_\p} | \int_\face \Pinablaf \tIf w \pjf  |^2  .
\end{equation}
Now, as under our assumptions the operator $\Pinablaf$ is bounded in $H^1(\face)$ uniformly in $h$ \cite{Chen2018}, we have 
\[
 h^2 \sum_{j=N_{\p-2} + 1}^{N_\p} | \int_\face \Pinablaf \tIf w \pjf  |^2  \lesssim \int_\face | \Pinablaf \tIf w |^2 
 \lesssim \int_\face | \tIf w |^2.
\]
Bounding the right hand side by \eqref{boundnonenhanced} and combining with \eqref{stellina} we get the thesis.



\newcommand{\faclogd}{\left( 1+\log\left(\displaystyle\frac{H}{h}\right)\right)}
\newcommand{\logHhd}{\log\left(\displaystyle\frac{H}{h}\right)}

\section{Domain decomposition for the Virtual Element Method}\label{sec:dualprimal}

\subsection{The subdomain decomposition}

We assume that  $\Th$ can be split as $\Th = \cup_\ell \Thk$, inducing a decomposition  of  $\Omega$ as the union of $L$ disjoint polyhedral subdomains $\oml$:
\begin{equation}\label{patch}
\bar \Omega = \cup_\ell \bar \Omega^\ell \quad \text{ with }\quad  {\bar \Omega^\ell}= \cup_{K \in \Thk} K.
\end{equation}
We make the following assumptions.
\begin{assumption}\label{ass:macroel} There exist a constant $\gammastar>0$ and $\Nstar > 0$ such that the subdomain decomposition satisfies the following properties:
	\begin{enumerate}
		\item {it is geometrically conforming, that is, for all $\ell$, if a vertex,  edge, or  face of $\Omega^\ell$ is contained in $\partial\Omega^\ell \cap \partial \Omega^m$, it is also, respectively, a vertex,  edge, or face of $\Omega^m$;}%
		\item the subdomains $\Omega^\ell$ are shape regular (in the sense of Definition \ref{shape_regular}) of diameter $H_\ell$ with constants $\gamma_{\Omega^\ell} > \gamma^\star$ and $N_{\Omega^\ell} < \Nstar$;
%
\item 	for all $\ell$, there exists a scalar $\rho_\ell > 0$ such that
	$\rho|_{\Omega^\ell} \simeq \rho_\ell$; 
	\item 
	the decomposition is quasi uniform:
		there exists an $H$ such that for all $\ell$ we have $H_\ell \simeq H$.
  \end{enumerate}
\end{assumption}

We will refer to the edges and faces of the subdomains $\Omega^\ell$ 
as macro edges and macro faces. We let $\Gamma = \cup \partial\Omega^\ell \setminus \partial\Omega$ denote the skeleton of the decomposition, $\EH$ and $\FH$ denote, respectively, the set of macro edges $E$ and of macro faces $F$  of the subdomain decomposition interior to $\Omega$, and $\FH^\ell$ and $\EH^\ell$ denote the set of, respectively, macro faces and macro edges of the subdomain $\Omega^ \ell$. 

We also let $\wirebasket = \cup_{E\in\EH} \bar E$ denote the {\em wirebasket} (that is, the union of all the edges) of the decomposition. Finally, we let $\xpoints$ denote the set of the vertices of the decomposition (the cross points), see Figure~\ref{fig:split_domain}.

\begin{figure}
	\centering
	\includegraphics[height=3.5cm]{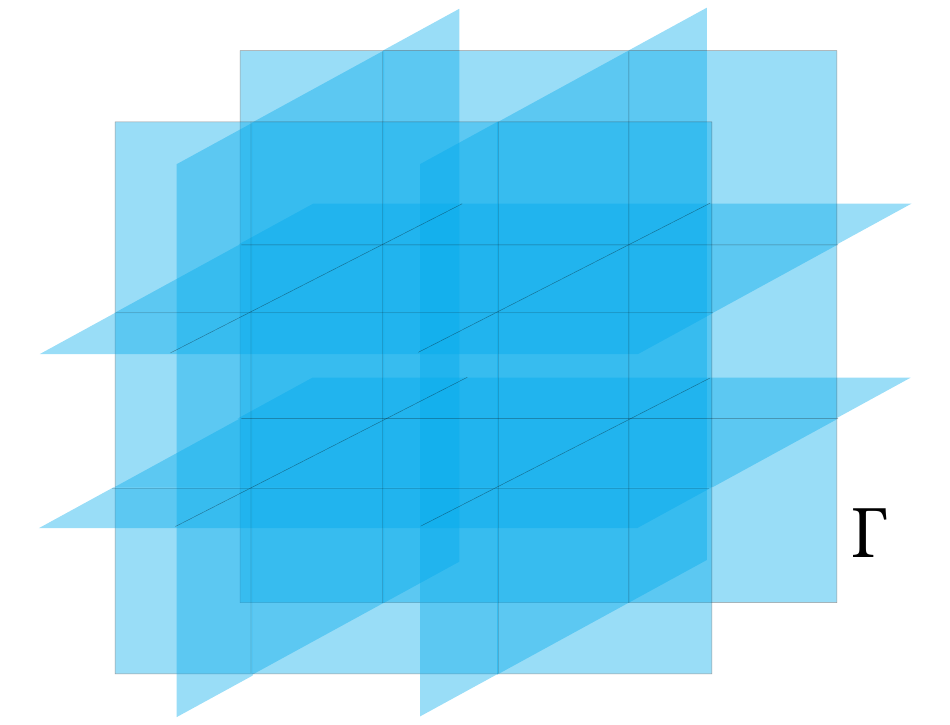}\qquad\qquad
		\includegraphics[height=3.5cm]{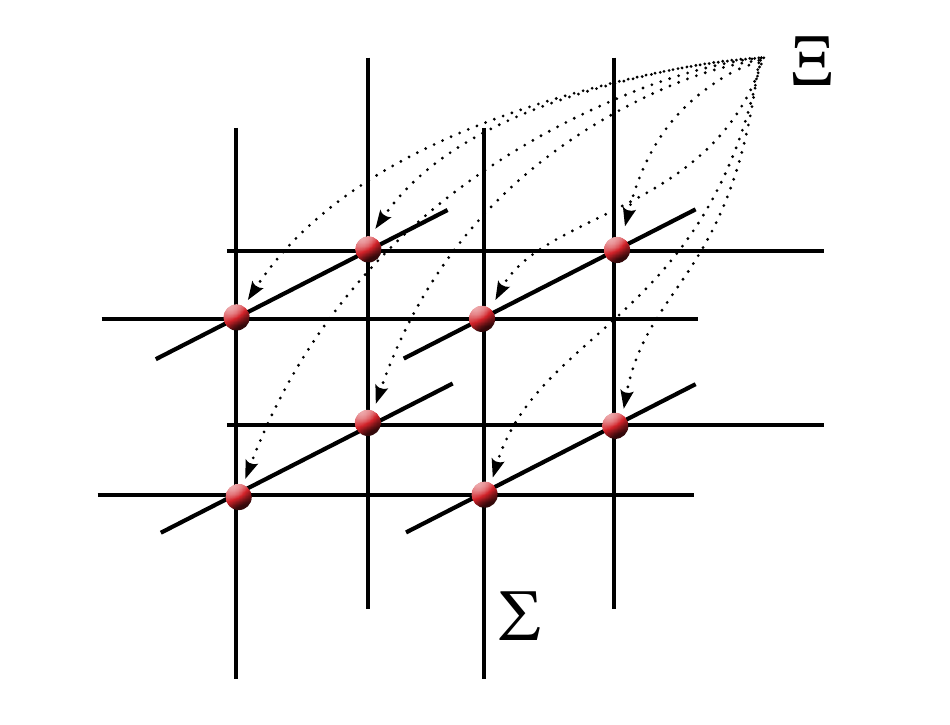}
		\caption{On the left  the interior part of the interface $\Gamma$, on the right the interior part of the corresponding wirebasket $\wirebasket$ and the set of cross points $\xpoints$.}
		\label{fig:split_domain}
\end{figure}
\

{	We would like to point out that Assumption \ref{ass:macroel}, which is quite standard in the framework of domain decomposition methods, is actually also an assumption on the tessellation $\Th$, satisfied, for instance, if $\Th$ is built by first introducing the subdomains $\Omega^\ell$ 
 and then refining them. This is, of course, quite strong an assumption which might leave out some relevant discretization. In this respect, a couple of observations are in order. 
		On the one hand, also in the finite element framework, the literature on FETI methods in irregular domains is quite scarce and, to our knowledge, limited to two dimensions \cite{widlundrough}. On the other hand, while the theory relies on Assumption \ref{ass:macroel}, the design of the preconditioner itself does not, so that in practice, the FETI-DP is also available for subdomains decomposition  with irregularly shaped subdomains such as those obtained by graph partitioning algorithms starting from a fine tessellation. 	}


\subsection*{Notation: Global versus local degrees of freedom}
 In the following we will need to single out different subsets of the set $\Upsilon$ of degrees of freedom. To this aim we start by letting
\[
\mathcal{Y} = \{ i: \ \support(\dofi) \subseteq \Gamma\}
\]
denote the set of indexes $i$ such that  the degree of freedom $\dofi$ is supported on the interface $\Gamma$ of the decomposition. 
For each subdomain $\Omega^\ell$ we let $\Yl \subset \mathcal Y$
\[
\Yl = \{i: \support(\dofi) \subset \partial\Omega^\ell\},
\]
denote the set of indexes of those degrees of freedom supported in $\partial \Omega^\ell$. 

 Analogously, for each macro edge $E$ and for each  macro face $F$  we let
\[ \YE= \{ i: \ \support(\dofi) \subseteq \bar E \}, \qquad \YF = \{ i: \ \support(\dofi) \subseteq \bar F \}, \] 
denote the set of indices of those degrees of freedom supported on $\bar E$ and $\bar F$.
Moreover, we let $\cW$ be the set of indices of those degrees of freedom that are, respectively, supported on the wirebasket $\wirebasket$, 
\[
\cW = \{ i : \ \support(\dofi) \subset \wirebasket \}
\]
and we let
\[\Wl = \cW \cap \Yl, \qquad
\WF = \cW \cap \YF = \{i: \ \support(\dofi) \in \partial F\},
\]
respectively, denote the set of indices of those degrees of freedom that are supported on  the edges of $\Omega^\ell$ and on $\partial F$.

\

Finally, we let $\mathcal{X}$ and $\Xl$ denote, respectively, the set of indices of those degrees of freedom supported in the set $\xpoints$ of cross points (vertices of the subdomain decomposition), respectively, in the set $\xpoints \cap \bar \Omega^\ell$ of vertices of the subdomain $\Omega^\ell$: 
\[\mathcal{X} = \{i: \support(\dofi) \subset \xpoints\}, \qquad \Xl = \mathcal{X} \cap \Yl.\]

Conversely, for each degree of freedom $\dofi$ supported on the skeleton $\Gamma$ of the decomposition we let $\Ni$ denote the set of indices of those subdomains whose boundary the support of $\dofi$ lies on:
\[ \Ni = \{\ell: \support(\dofi) \subset \partial\Omega^\ell\},\qquad n_i = \#(\Ni). \] 
Observe that, for $\ell \in \Ni$ we can, in a natural way, give meaning to the expression $\dofi(w^\ell_h)$ for all $w^\ell_h \in {V_h^\ell}$.
Finally, for each vertex $\vertex \in \xpoints$, edge $E \in \EH$, and face $\face \in \FH$ we can also define the set $\NV$, $\NE$,  and $\NF$  of the indices of those subdomains that share, respectively, $\vertex$, $E$  and $F$ as a vertex, edge or face:
\[ \NV = \{
\ell: \ \vertex  \subset \partial\Omega^\ell\}, \qquad 
\NE = \{ \ell: \ E \subset \partial\Omega^\ell\}, \qquad \NF = \{ \ell: \ F \subset \partial\Omega^\ell\}.
\]
Remark that for all $i \in \YE \setminus \cX$ we have $\Ni = \NE$.

	\section*{Notation: Scaled norms and seminorms}
	In the following we will make use of suitably scaled norms for the Sobolev spaces defined on faces and edges of the subdomains. More precisely, 
	letting $D$ denote any $d$ dimensional domain ($d=1,2,3$) we set
	\begin{gather}\label{scalednorms1}
	\| w \|^2_{L^2(D)} = H^{-d} \int_D | w |^2, \qquad | w |_{H^1(D)} = H^{2-d} \int_{D} | \nabla w |^2,\\
	\label{scalednorms2}
	| w |_{H^s(D)} = H^{2s-d} \int_D \int_D \frac{| w(\sigma) - w(\tau) |^2}{|\sigma - \tau |^{d+2s}}\, d\sigma\, d\tau.
		\end{gather}

	For $D \subset \mathbb{R}^2$ bounded domain, we will also consider the spaces 
	$H^s_0(D)$ ($s \not= 1/2$) and 
	$H^{1/2}_{00}(D)$ of those functions $w$  whose extension by zero $\widehat w$  is in 
	$H^s(\mathbb{R}^2)$ ($s \not= 1/2$) and 
	$H^{1/2}(\mathbb{R}^2)$,  respectively, which we will equip with the norm
	\begin{equation}\label{scalednorms3}
\| w \|_{H^s_0(D)} = | \extension{w} |_{H^s(\mathbb{R}^2)} \qquad \text{and} \qquad \| \extension{w} \|_{H^{1/2}_{00}(D)} = | \extension{w} |_{H^{1/2}(\mathbb{R}^2)}.
	\end{equation}
	
\subsection*{Domain decomposition and FETI-DP preconditioner} The subdomain spaces $V_h^\ell$ and bilinear forms $a_h^\ell:V_h^\ell \times V_h^\ell \to \mathbb{R}$ are defined, as usual, as
\[ 
V_h^\ell = {V_h}|_{\oml}, \qquad a_h^\ell(u_h,v_h) = \sum_{K\in \Thk} a_h^K(u_h,v_h).
\label{ah}
\]
In view of (\ref{elemequiv}) we immediately obtain that for all $u_h, v_h \in V_h^\ell$
\[
a_h^\ell(u_h,v_h) \lesssim {\rho_\ell H }| u_h |_{1,\oml}  | v_h |_{1,\oml}, \qquad a_h^\ell(u_h,u_h) \simeq {\rho_\ell H} | u_h |_{1,\oml}^2.
\]
Solving Problem \ref{discrete_full} is then reduced to finding $u_h = (u_h^\ell)_\ell \in \prod V_h^\ell$ minimizing
\[
J(u_h) = \frac 1 2 \sum_{\ell} a^\ell_h(u_h^\ell,u_h^\ell) -  \sum_K \int_K g^K_h u_h,
\]
subject to a continuity constraint across the interface. 
\

The FETI-DP 
preconditioner is then constructed according to the same strategy used in the finite element case, which we recall, mainly to fix some notation. We let
\[
\mathring V_h = \prod \mathring V_h^\ell \quad \text{ with }\quad \mathring V_h^\ell = V_h^\ell \cap H^1_0(\oml),
\]
and
\begin{equation}\label{Wspace}
W_h = \prod_{\ell}W_h^\ell  \quad \text{ with }\quad W_h^\ell = {V_h^\ell}|_{\partial\oml}.
\end{equation}
Moreover, for each macro face $F$ and macro edge $E$ we let 
$$W_h^F = V_h|_F\qquad \text{and} \qquad W_h^E=V_h|_E$$
 denote, respectively, the trace on $F$ and $E$ of $V_h$. On $W_h$ we define a norm and a seminorm:
\[
\normWh{w_h}^2 = \sum_{\ell} \rho^\ell \| w_h^\ell \|_{H^{1/2}(\partial\oml)}^2, \qquad \snormWh{w_h}^2 = \sum_{\ell}\rho^\ell | w_h^\ell |_{H^{1/2}(\partial\oml)}^2.
\]

As usual, we define local discrete lifting operators $\Lifthl: W_h^\ell \to V_h^\ell$ as
\begin{eqnarray}
a^\ell_h (\Lifthl w_h,v_h)=0 \quad \forall v_h \in \mathring V_h^\ell, \qquad
\Lifthl w_h = w_h \quad \ \text{ on }\partial\oml.
\end{eqnarray}
The following proposition holds.
\begin{proposition}
	$\Lifthl$ is well defined, and it verifies 
	\[
	| \Lifthl w_h |_{H^1(\oml)} \simeq | w_h |_{H^{1/2}(\partial\oml)}.
	\]
\end{proposition}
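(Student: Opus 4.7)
Well-posedness follows from a standard finite-dimensional Lax--Milgram argument: writing $\Lifthl w_h = \tilde w_h + z_h$ with $\tilde w_h \in V_h^\ell$ any VEM extension of $w_h$ and $z_h \in \mathring V_h^\ell$ unknown, the condition $a_h^\ell(\Lifthl w_h, v_h)=0$ for all $v_h \in \mathring V_h^\ell$ reduces to finding $z_h$ solving $a_h^\ell(z_h, v_h)=-a_h^\ell(\tilde w_h,v_h)$. The bilinear form $a_h^\ell$ is coercive on $\mathring V_h^\ell$ because the elementwise equivalence $a_h^K(v,v)\simeq |v|_{1,K}^2$ combined with Poincar\'e's inequality on $\oml$ (valid since elements of $\mathring V_h^\ell$ vanish on $\partial\oml$) gives $a_h^\ell(z_h,z_h)\gtrsim \|z_h\|_{H^1(\oml)}^2$, so existence and uniqueness are immediate.

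The lower bound $|w_h|_{H^{1/2}(\partial\oml)} \lesssim |\Lifthl w_h|_{H^1(\oml)}$ is an instance of the trace theorem on $\oml$, applied in the scaled norms \eqref{scaled2}--\eqref{scaled4}; the constant is uniform in $H$ thanks to the shape regularity imposed by Assumption~\ref{ass:macroel}. For the matching upper bound, I would exploit the characterization of $\Lifthl w_h$ as the minimizer of $a_h^\ell(u_h,u_h)$ over all $u_h \in V_h^\ell$ with $u_h|_{\partial\oml}=w_h$. Combined once more with $a_h^\ell(v,v) \simeq |v|_{1,\oml}^2$, this reduces the task to exhibiting \emph{one} $\tilde w_h \in V_h^\ell$ satisfying $\tilde w_h|_{\partial\oml}=w_h$ and $|\tilde w_h|_{1,\oml} \lesssim |w_h|_{H^{1/2}(\partial\oml)}$. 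My candidate is $\tilde w_h = \Pscottzhang u$, where $u \in H^1(\oml)$ is the continuous harmonic extension of $w_h$ (so $|u|_{1,\oml} \simeq |w_h|_{H^{1/2}(\partial\oml)}$ by the classical lifting theorem) and $\Pscottzhang$ is a Scott--Zhang-type quasi-interpolant into $V_h^\ell$.

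The main obstacle is the construction of $\Pscottzhang$: it must preserve the nodal values of $w_h$ on $\partial\oml$ to ensure $\tilde w_h|_{\partial\oml}=w_h$, and be $H^1$-stable; since the VEM shape functions are not explicitly known, stability cannot be read off from basis expansions. I would define $(\Pscottzhang u)(V_i)$ as a weighted average of $u$ against a dual basis supported on a single face or edge adjacent to $V_i$, with that patch chosen inside $\partial\oml$ whenever $V_i \in \partial\oml$, so that by construction $(\Pscottzhang u)(V_i)=w_h(V_i)$. Stability is then established element by element, by combining the Riesz basis property of Lemma~\ref{lem:Riesz} to pass between nodal values and $L^2$-norms on faces, the inverse estimate \eqref{inverse2} and Agmon's inequality \eqref{agmon} to move between face and element contributions, and the $H^1$-boundedness of the harmonic lifting inside each $K$ in terms of its boundary trace. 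Summing the resulting local bounds over $K \in \Thk$ would yield the desired $|\tilde w_h|_{1,\oml}\lesssim |u|_{1,\oml}$.
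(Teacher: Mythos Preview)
Your proposal is correct and follows essentially the same route as the paper: both reduce the upper bound to exhibiting a single stable VEM extension of $w_h$, and both take that extension to be a Scott--Zhang--type quasi-interpolant applied to the continuous harmonic lifting of $w_h$. The only real difference is that the paper does not reconstruct $\Pscottzhang$ but simply invokes it from \cite{Cangianietal17}, where the boundary-preservation property (if $v|_{\partial\Omega^\ell}\in W_h^\ell$ then $\Pscottzhang v = v$ on $\partial\Omega^\ell$) and the $H^1$-stability are already proved; this spares the paper the element-by-element argument you outline. Your sketch of that argument is reasonable, though the details (in particular, building the dual functionals so that they recover nodal values of VEM boundary traces, which on faces are not known in closed form) are exactly what \cite{Cangianietal17} handles, so citing it is the cleaner option.
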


\begin{proof}
	We start by recalling that 	there exists a linear operator $\Pscottzhang: H^1(\Omega^\ell) \to V^\ell_h$ such that, for $v \in H^{1+s}(\Omega^\ell)$, $0 \leq s \leq 1$, one has (\cite{Cangianietal17})
	\begin{equation}\label{SZ}
	\| v - \Pscottzhang v \|_{L^2(\Omega^\ell)} + \frac{h}{H} | v - \Pscottzhang v |_{H^1(\Omega^\ell)} \lesssim 
	\left(\frac{h}{H}  \right)^{1+s}  | v |_{H^{1+s}(\Omega^\ell)} 
	\end{equation}
(note that we are using scaled norms, see (\ref{scalednorms1})--(\ref{scalednorms3})).	Moreover $\Pscottzhang$ is constructed in such a way that if $v|_{\partial\Omega^\ell} \in W_h^\ell$ one has $\Pscottzhang v = v$ on $\partial\Omega^\ell$.
	 In view of this result it is not difficult to construct an operator $L_h^\ell: W_h^\ell \to V_h^\ell$ satisfying
	$L^\ell_h w_h |_{\partial\oml} = w_h$ and 
	\begin{equation}\label{stab1} | L^\ell_h w_h |_{H^1(\oml)} \leq | w_h |_{H^{1/2}(\partial\Omega^\ell)}.
	\end{equation}
	$L^\ell_h$ can be for instance defined as $\Pi_{SZ}$ applied to the harmonic lifting of $w_h$;~\eqref{stab1} follows then from the stability of the  harmonic lifting and of $\Pscottzhang$, exactly as in the two dimensional case \cite{FETI_VEM_2D}. We can then write
	\begin{multline*}
	| \Lifthl w_h - L^\ell_h w_h|_{H^1(\oml)}^2 \lesssim {H^{-1} \rho^{-1}_\ell} a_h^\ell( \Lifthl w_h - L^\ell_h w_h, \Lifthl w_h - L^\ell_h w_h) \\  = - {H^{-1} \rho^{-1}_\ell} a_h^\ell(  L^\ell_h w_h, \Lifthl w_h - L^\ell_h w_h) \lesssim | L^\ell_h w_h |_{H^1(\oml)} 	| \Lifthl w_h - L^\ell_h w_h|_{H^1(\oml)}.
	\end{multline*}
After dividing both sides by $\rho_\ell | \Lifthl w_h - L^\ell_h w_h|_{H^1(\oml)}$,	in view of (\ref{stab1}) it is easy to conclude by applying a triangular inequality. 
\end{proof}



%
%
%

\

Let now $\widehat W_h \subset W_h$ denote the subset  of 
 functions which are single valued across $\Gamma$:
\begin{align}\label{What}
\widehat W_h &=  \{ w_h \in W_h:\ \forall \ell,k,\  \forall x \in \bar \Omega^\ell \cap \bar \Omega^k, \ w_h^\ell(x) = w_h^k(x)\}  \\[2mm]
& =   \{ w_h \in W_h: \forall \ell,k,\ \forall i \in \Yl \cap \mathcal{Y}^k,\ 
\dofi(w_h^\ell) = \dofi(w_h^k)
\}. \nonumber
\end{align}

\

For $w_h = (w_h^\ell)_\ell \in W_h$ we let $\Lifth(w_h) = (\Lifthl w_h^\ell)_\ell$, 
so that 
 $V_h$ is split as 
\begin{equation}\label{splitting}
V_h = \mathring V_h \oplus \Lifth \widehat W_h.
\end{equation}

\

We next define the bilinear form $s:W_h \times W_h \to \mathbb{R}$ as
\[
s(w_h,v_h) := \sum_\ell a^\ell_h (\Lifthl w_h,\Lifthl v_h).
\]
The proof of the following proposition, where the factor $H$ in bounds \eqref{contcoercschur} stems from using norms scaled as in \eqref{scalednorms2},  is trivial.
\begin{proposition} For all $v_h, w_h \in W_h$ we have
	\begin{equation} \label{contcoercschur}
	s(v_h,w_h) \lesssim H | w_h |_{1/2,*} | v_h |_{1/2,*}, \qquad s(w_h,w_h) \gtrsim H | w_h |_{1/2,*}^2.
	\end{equation}	
\end{proposition}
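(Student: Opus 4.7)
The proof is a direct chaining of three ingredients already at our disposal, which is presumably why the authors deem it trivial. First I would unfold the definition
$$s(w_h,v_h) = \sum_\ell a_h^\ell(\Lifthl w_h, \Lifthl v_h),$$
and bound each summand using the continuity of $a_h^\ell$. The stated property $a_h^\ell(u_h,v_h)\lesssim|u_h|_{1,\oml}|v_h|_{1,\oml}$ must be read with a $\rho^\ell$ factor, since by Assumption~\ref{ass:macroel} the coefficient $\rho$ is equivalent on $\Omega^\ell$ to the constant $\rho^\ell$ used in the definition of $\snormWh{\cdot}$; thus we have the subdomain-wise bounds
$$a_h^\ell(u_h,v_h)\lesssim \rho^\ell\,|u_h|_{1,\oml}|v_h|_{1,\oml},\qquad a_h^\ell(u_h,u_h)\simeq \rho^\ell\,|u_h|^2_{1,\oml}.$$
Next I would plug in the lifting equivalence $|\Lifthl z_h|_{1,\oml}\simeq |z_h|_{H^{1/2}(\partial\oml)}$ proved in the preceding proposition, which for the continuity estimate yields
$$s(w_h,v_h)\;\lesssim\;\sum_\ell \rho^\ell\,|w_h^\ell|_{H^{1/2}(\partial\oml)}\,|v_h^\ell|_{H^{1/2}(\partial\oml)}.$$
A discrete Cauchy--Schwarz inequality over the subdomain index $\ell$ then factorises the right-hand side as $\snormWh{w_h}\snormWh{v_h}$, giving the first bound.

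For the coercivity I would use the matching lower bound $a_h^\ell(\Lifthl w_h,\Lifthl w_h)\gtrsim \rho^\ell|\Lifthl w_h|^2_{1,\oml}$ together with the same lifting equivalence to conclude
$$s(w_h,w_h)\;\gtrsim\;\sum_\ell \rho^\ell\,|w_h^\ell|^2_{H^{1/2}(\partial\oml)}\;=\;\snormWh{w_h}^2.$$
There is no genuine obstacle; the only point requiring attention is the careful tracking of the $\rho^\ell$ factor, so that the hidden constants in $\lesssim$ and $\simeq$ remain independent of the diffusion coefficient in accordance with the paper's convention, securing robustness with respect to jumps in $\rho$ across the interface.
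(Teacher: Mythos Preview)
Your argument is correct and is precisely the ``trivial'' proof the paper leaves to the reader: unfold the definition of $s$, apply the elementwise equivalence $a_h^\ell(\cdot,\cdot)\simeq\rho^\ell|\cdot|_{1,\oml}^2$, invoke the lifting equivalence of the preceding proposition, and finish with Cauchy--Schwarz over $\ell$. Your remark about carrying the $\rho^\ell$ factor explicitly is well taken, since the paper's displayed bound for $a_h^\ell$ omits it even though the convention on $\lesssim$ forbids constants depending on $\rho$.
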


\

The solution $u_h$ of Problem \ref{discrete_full} is then split as 
$u_h = \mathring u_h + \Lifth w_h$, where $\mathring u_h$ and $w_h$ are the solutions of the following two independent problems.
\begin{problem}\label{interior} Find $\mathring{u}_h \in \mathring V_h$ such that for all $v_h \in \mathring V_h$ 
	\[
	a_h (\mathring u_h,v_h) =  \sum_K \int_K g^K_h v_h.
	\]
\end{problem}

\begin{problem}\label{schur} Find $w_h \in \hW$ such that for all $v_h \in \hW$
	\[
	s(w_h,v_h) =  \sum_K \int_K g^K_h \mathcal{L}_h v_h.
	\]
\end{problem}

\

Exactly as in the finite element case, the design of different versions of the FETI-DP method will rely on the choice of a subspace  $\widetilde W_h$ of $W_h$ whose elements have some degree of continuity on $\Gamma$, ensuring that the restriction to $\widetilde W_h$ of the bilinear form $s$ is coercive (which is equivalent to asking that the seminorm $| \cdot |_{1/2,*}$ is a norm on $\widetilde W_h$). We recall that, while in two dimensions the space $\widetilde W_h$ can be defined as the subspace of functions continuous at the vertices of the subdomains, it is known that this is not sufficient to get a quasi-optimal result in the three dimensional case, so that we also need to impose continuity of either edge or face averages (or both). While later in the paper we will analyze the different possible choices, for now we will only assume that $s$ is coercive on $\widetilde W_h$ (or, equivalently, that $a_h$ is coercive on $\Lifth \widetilde W_h$), and that $\widehat W_h \subseteq \widetilde W_h$.

\

Following the approach of \cite{brennersung}, we introduce the operators $\hSop:\hW \to \hW'$ and $\tSop: \tW \to \tW'$ defined, respectively, as
\begin{equation}\label{S_tilde_hat}
\langle \hSop w_h, v_h \rangle = s(w_h,v_h)\ \forall v_h \in \hW, \qquad  \langle \tSop w_h, v_h \rangle = s(w_h,v_h)\ \forall v_h \in \tW,
\end{equation}
and we let $\Rop: \hW \to \tW$ denote the natural injection operator.  We observe that  
\[
\hSop = \Rop^T \tSop \Rop.
\]
Problem \ref{schur} becomes 
\begin{equation}\label{system_BDDC}
\hSop u =\widehat g \qquad {\text{ with }\quad \langle \widehat g, w_h \rangle =  \sum_{K \in \Th} \int_K g^K_h \Lifth w_h }.
\end{equation}
%
%

As in \cite{Mandel_algebraic,Klawonn.Widlund.06}, 
we define a scalar product $d:W_h \times W_h \to \mathbb{R}$ in terms of the degrees of freedom $\dofi$, $i \in \mathcal{Y}$ 
 as
\begin{equation}\label{defscald}
d(w_h,v_h) =  \sum_\ell \sum_{i \in \Yl} d^{\ell,i} 
{\dofi(w_h^\ell)}{\dofi(v_h^\ell)},
\end{equation}
where, for $i \in \Yl$, the scaling coefficient $d^{\ell,i}$ is defined as 
\begin{equation}\label{dli}
d^{\ell,i}
= 
\frac{\rho_\ell^\gamma}{\sum_{j\in \Ni } \rho_j^\gamma}.   
\end{equation}

Next, we  introduce the projection operator $\ED: W_h \to \hW$, orthogonal  with respect to the scalar product $d$:
\begin{equation}\label{ED}
d(\ED w_h,v_h) = d(w_h, v_h)\quad  \ \forall v_h \in \hW.
\end{equation}
%
%
%

Following \cite{brennersung}, we introduce the quotient space $\tW / \hW$. We let $\Lambda_h = (\tW / \hW)'$ denote its dual and we let
$\Bop: \tW \to \tW / \hW= \Lambda_h'$ denote the quotient mapping, defined as 
\[
\Bop w_h = w_h +\hW.
\]

Observe that two elements $w_h$ and $v_h$ are representative of the same equivalence class if and only if they have the same jump across the interface: for all macro facs $F \in \FH$ with $F \subseteq\partial \Omega^m \cap \partial\Omega^\ell$,  $w_h^m - w_h^\ell = v_h^m - v_h^\ell$. We can then identify $\tW / \hW$ with the set of jumps of elements of $\widetilde W_h$. The quotient map $\Bop$ can then be interpreted as the operator that maps an element of $\widetilde W_h$ to its jump on the interface.
Clearly
\[ \hW = \ker(\mathcal{B}) = \{  w_h \in \tW: \ b(w_h,\lambda)=0 \ \forall \lambda\in \Lambda_h \},\]
where $b: \tW \times \Lambda_h \to \mathbb{R}$ is defined  as $b(w_h,\lambda_h) = \langle \Bop w_h,\lambda_h \rangle$.
Problem \ref{discrete_full} is then equivalent to the following saddle point problem: find $w_h \in \tW$, $\lambda_h \in \Lambda_h$ solution to
\begin{equation}\label{eq:discrete_saddle}
\tSop w_h - \Bop^T \lambda_h = \widehat g,\qquad \Bop w_h = 0.
\end{equation}
Using the first equation in \nref{eq:discrete_saddle} to express $w_h$ as a function of $\lambda_h$, we eliminate the former unknown and finally reduce the solution of Problem \ref{schur} to the solution of a problem in the unknown $\lambda_h \in \Lambda_h$ of the form 
\begin{equation}\label{system_FETI}
\Bop \tSop^{-1}\Bop^T \lambda_h = -\Bop\tSop^{-1}\widehat g.
\end{equation}

 Letting now $\Bop^+ : \Lambda'_h \to \tW$ be any right inverse of $\Bop$ (for $\eta \in \Lambda_h' = \tW / \hW$, $\Bop^+ \eta$ is any element in $\tW$ such that $\Bop \Bop^+ \eta = \eta
 $), we set
\begin{equation}\label{defBop}
\Bop^T_D = (\IdtW-\ED) \Bop^+,
\end{equation}
where $\IdtW$ denotes the identity operator in the space $\tW$.
Recall that, as in the two dimensional case,  the definition of $\Bop_D^T$ is independent of the actual choice of the operator $\Bop^+$. 
Indeed $\Bop (\Bop_1^+ \eta - \Bop_2^+ \eta)  = \eta - \eta =0$ implies $\Bop_1^+ \eta - \Bop_2^+ \eta \in \hW$ and hence $(\IdtW-\ED)\Bop_1^+ \eta - (\IdtW-\ED) \Bop_2^+ \eta = 0$.

\

We finally let the FETI-DP preconditioner  $\FETI: \Lambda'_h \to \Lambda_h$  be defined by 
\begin{equation}\label{prec_FETI}
\FETI = \Bop_D \tSop \Bop_D^T.
\end{equation}

	\

Let us now come to the choice of the space $\widetilde W_h$, or, equivalently, to the choice of the so called {\em primal} degrees of freedom (d.o.f.s), which are taken as single valued, thus directly imposing the corresponding degree of continuity across the interface, whereas continuity for the remaining {\em dual} (d.o.f.s) is imposed via Lagrange multipliers in $\Lambda_h$. Exactly as in the finite element case there are several possibilities. Letting
\begin{align}
\label{Wtilde_V}
\rosa{\widetilde W_h^{(V)}} & = \{ w_h \in W_h: 
w_h^\ell (\vertex)=w_h^m(\vertex) , \,\, \forall \vertex \in \xpoints, \ell,m \in \NV\}, \\
\rosa{\widetilde W_h^{(E)}} & = 
\{ w_h \in W_h :   \int_E w_h^\ell = \int_E w_h^k,\, \, \forall E \in  \EH, \ell,k \in \mathcal{N}_E \},\nonumber \\
\rosa{\widetilde W_h^{(F)}} & =  \{ w_h \in W_h : 
\int_F w_h^\ell = \int_F w_h^k,  \,\, \forall F \in  \FH, \ell,k \in \mathcal{N}_F \}, \nonumber 
\end{align}
  denote  the subset of $W_h$ of traces of functions which, respectively, are continuous at cross-points, have the same average at all edges, and have same average at all faces, 
different  choices for $\widetilde W_h$ can be considered,  resulting in different versions of the FETI-DP algorithms. More precisely we have
\medskip
\begin{itemize}[align=right,labelwidth=1.5cm] \setlength\itemsep{.5em}
	\item[{\sc E:}] $\widetilde W_h = \widetilde W_h^{(E)}$  (primal d.o.f.s are the edge averages);
	\item[{\sc F:}] $\widetilde W_h =\rosa{\widetilde W_h^{(F)}}$  (primal d.o.f.s are the  face averages);
	\item[{\sc VE:}] $\widetilde W_h = \rosa{\widetilde W_h^{(V)}} \cap\rosa{\widetilde W_h^{(E)}}$ (primal d.o.f.s are the values at cross points and the edge averages);
	\item[{\sc VF:}] $\widetilde W_h = \rosa{\widetilde W_h^{(V)}} \cap\rosa{\widetilde W_h^{(F)}}$  (primal d.o.f.s are the values at cross points and the face averages);
	\item[{\sc EF:}] $\widetilde W_h =\rosa{\widetilde W_h^{(E)}} \cap\rosa{\widetilde W_h^{(F)}}$  (primal d.o.f.s are edge and face averages);
	\item[{\sc VEF:}] $\widetilde W_h = \rosa{\widetilde W_h^{(V)}} \cap\rosa{\widetilde W_h^{(E)}} \cap\rosa{\widetilde W_h^{(F)}}$ (primal d.o.f.s are the values at cross points and the face and edge averages).
\end{itemize}

\

\subsection{The main theorem}

We can now state the main result of this paper.
\begin{theorem}\label{bound_FETI-DP} Letting $\kappa$ denote the condition number of the matrix corresponding to the operator $\FETI(\Bop \tSop^{-1} \Bop^T)$, depending on the choice of the space $\widetilde W_h$ we have the following bounds:
	\begin{description}[leftmargin=*] 
		\item[{Algorithms   E/EF:}]	If $\widetilde W_h \subseteq\rosa{\widetilde W_h^{(E)}}$  then
	\[		\kappa\lesssim \left(1+\logHhd+\CjumpE \right) \faclogd; \]
		
		\item[{Algorithm F:}] 	If $\widetilde W_h \subseteq\rosa{\widetilde W_h^{(F)}}$
		then 
	\[	\hspace{.1cm}  
		\kappa \lesssim  \left (1+\logHhd+\CjumpF\right)\faclogd;
		\]
		
		\item[{Algorithms  VE/VEF:}] 	If $\widetilde W_h \subseteq\rosa{\widetilde W_h^{(V)}} \cap\rosa{\widetilde W_h^{(E)}} $  then
	\[
		\kappa \lesssim \faclogd^2; 
		\]
		\item[{Algorithm VF:}]  	If $\widetilde W_h \subseteq\rosa{\widetilde W_h^{(V)}} \cap\rosa{\widetilde W_h^{(F)}} $   then
	\[
		\kappa \lesssim \left( 1+\logHhd+\CjumpEF\right )\faclogd,
		\]
		
	\end{description}
	where $\CjumpE$, $\CjumpF$, and $\CjumpEF$ are constants depending on the diffusion coefficient $\rho$  that satisfy
	\begin{equation}\label{rel:tol}
	0 < \CjumpE,\CjumpF,\CjumpEF \leq \max \rho / \min \rho, \qquad
	\CjumpE \leq \CjumpF, \qquad \CjumpEF \leq \CjumpF.\end{equation}
\end{theorem}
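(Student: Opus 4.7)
The plan is to follow the standard FETI-DP abstract framework (cf.\ \cite{brennersung,MandelSousedik,Toselli.Widlund}), which reduces the condition number estimate to bounding the jump projection $I_{\widetilde W_h} - \ED$ in the $|\cdot|_{1/2,*}$ seminorm. Combining the coercivity of $s$ on $\widetilde W_h$ (which follows from (\ref{contcoercschur}) together with the assumed injectivity of $|\cdot|_{1/2,*}$ on $\widetilde W_h$) with the abstract estimate yields
\[
\kappa\big(\FETI(\Bop \tSop^{-1} \Bop^T)\big) \;\lesssim\; \sup_{w_h \in \tW}\; \frac{|(I_{\widetilde W_h}-\ED) w_h|_{1/2,*}^2}{|w_h|_{1/2,*}^2}.
\]
This reduction is the same machinery used in \cite{FETI_VEM_2D} for the 2D case, so the whole effort goes into establishing the right-hand side estimate.

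I would then decompose $(I_{\widetilde W_h}-\ED) w_h$ restricted to each $\Omega^\ell$ as a sum of contributions supported on macro-faces $F \subset \partial\Omega^\ell$, macro-edges $E \subset \partial\Omega^\ell$, and cross-points $V$, weighted by jumps across adjacent subdomains multiplied by the pseudo-inverse coefficients $d^{\ell,i}$ of (\ref{dli}). Each of these local contributions is the discrete harmonic extension to $\partial\Omega^\ell$ of $w_h$ multiplied by a discrete cutoff supported on the corresponding geometric entity. The required polylogarithmic bounds then follow if I can prove, for all $w_h \in V_h^\ell|_{\partial\Omega^\ell}$, the VEM analogues of the classical face, edge and vertex lemmas: for each macro-face $F$, $|\theta_F w_h|_{H^{1/2}(\partial\Omega^\ell)}^2 \lesssim \faclogd^2 \,|w_h|_{H^{1/2}(\partial\Omega^\ell)}^2$ (after subtracting the face/edge/vertex primal value as appropriate), together with the wire-basket bound
\[
\| w_h - \bar w_h^{\cW^\ell} \|_{L^\infty(\cW^\ell)}^2 \;\lesssim\; \faclogd\, |w_h|_{H^{1/2}(\partial\Omega^\ell)}^2,
\]
and analogous scaled $L^2$ estimates on edges and faces. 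Once these are in hand, the four cases (primal set $\subseteq \widetilde W_h^E$; $\subseteq \widetilde W_h^F$; $\subseteq \widetilde W_h^V \cap \widetilde W_h^E$; $\subseteq \widetilde W_h^V \cap \widetilde W_h^F$) are treated exactly as in \cite{Toselli.Widlund,Klawonn.Widlund.06,FETI_VEM_2D}. The $\rho$-robustness for the cases \textsc{VE/VEF} comes from using the weights $d^{\ell,i}$ to control $\rho_\ell$ against $\max_{j\in\Ni}\rho_j$; the quantities $\CjumpE,\CjumpF,\CjumpEF$ and the inequalities (\ref{rel:tol}) arise from the residual terms that appear when a cross-point primal constraint is \emph{not} present, and one must relate face averages to neighboring subdomain values through intermediate edges.

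The main obstacle is that the face/edge/vertex lemmas in the FE case are proved by an explicit decomposition on a nodal basis and direct computation of $H^{1/2}$ seminorms of standard hat functions multiplied by cutoffs, whereas in the VEM setting the local shape functions on $V^{\face,1}$ and $V^{K,1}$ are not available in closed form. To overcome this I would replace every polynomial computation by the implicit tools collected in Section \ref{sec:VEMprop}: (i) Lemma \ref{lem:Riesz} converts $L^2(\face)$ norms into weighted sums of squared nodal values on $\partial F$, reducing the analysis to one at the level of degrees of freedom; (ii) the inverse estimates (\ref{inverse1})--(\ref{inverse2}) absorb the Laplacian and gradient terms produced when working with the implicit definition (\ref{defspace1}) of $V^{\face,1}$; (iii) the Agmon inequality (\ref{agmon}) passes from faces to their boundary edges; and (iv) the stability of the discrete harmonic lifting $\Lifthl$ (proved in the preceding Proposition) lets one work on $\partial\Omega^\ell$ throughout. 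With these replacements the classical interpolation-and-cutoff arguments go through and yield the same polylogarithmic factors as in the FE theory; combining them with the weighted-counting argument based on $d^{\ell,i}$ then produces the four claimed bounds and the relations (\ref{rel:tol}).
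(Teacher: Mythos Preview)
Your plan is essentially the paper's own: reduce to the jump-projection estimate via the Mandel--Soused\'{i}k framework, split the contribution of $(I-\ED)w_h$ on each $\partial\Omega^\ell$ into a face-interior part (handled by a cutoff lemma, which is exactly Lemma~\ref{lem:technical}) and a wire-basket part, then run the weighted-counting and path arguments to produce the constants $\CjumpE,\CjumpF,\CjumpEF$.

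There is, however, one genuine 3D-specific gap in your list of ingredients. The wire-basket bound you write,
\[
\| w_h - \bar w_h^{\cW^\ell} \|_{L^\infty(\cW^\ell)}^2 \;\lesssim\; \faclogd\, |w_h|_{H^{1/2}(\partial\Omega^\ell)}^2,
\]
is a two-dimensional estimate and does \emph{not} hold in 3D with only a logarithmic factor: on a macro-edge the best one gets from an inverse inequality is an additional $(H/h)$ factor. The paper avoids this by never invoking an $L^\infty$ bound on the wire basket. Instead it uses the Riesz-basis Lemma~\ref{lem:Riesz} to convert the sum of squared nodal jumps on $\cW^\ell$ into scaled $L^2(E)$ norms, and then the logarithmic trace estimate of Proposition~\ref{prop:trace} (the $L^2(\partial F)$ bound) to control $\|w_h^\ell-\alpha^E_\ell\|_{L^2(E)}$ by $\faclogd^{1/2}|w_h^\ell|_{H^{1/2}(F)}$. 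The residual pointwise contributions at genuine cross-points are isolated and carry an explicit prefactor $(h/H)$ (the term $(h/H)\DeltaV$ in the paper's key lemma), which is what makes the subsequent path argument close without any sup-norm control. If you replace your $L^\infty$ wire-basket step by this $L^2(E)$ mechanism, the rest of your outline goes through verbatim and coincides with the paper's proof.
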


As it happens for the finite element case,
all the above choices lead to a quasi-optimal (in terms of dependence on $h$ and $H$) preconditioning, whereas robustness with respect to the jumps in the coefficient $\rho$ is achieved for algorithms {\sc VE} and {\sc VEF}.

\begin{remark}
	The precise definition of the constants $\CjumpE$, $\CjumpF$ and $\CjumpEF$ (which will be detailed in Section \ref{sec:FETI_prec}) is the same as in the finite element case \cite{Toselli.Widlund}, and it is quite technical.  We would like to remark that the bounds \nref{rel:tol} are often quite pessimistic, as we will see in Section \ref{sec:numerical}.
	\end{remark}

\rosa{	
	\begin{remark} We made the assumption that the subdomains are shape regular (point 2 of Assumption \ref{ass:macroel}) in order to have some of the inequalities needed in the proof of Theorem \ref{bound_FETI-DP}. More precisely, such an assumption yields the validity of the Poincar\'e inequality on the macro faces,
		of the trace inequalities on the subdomains and on the macro faces, and of the injection bound \eqref{s0tos}. While Poincar\'e and trace inequalities are valid for a much larger class of domains, including the ones obtained by applying graph partitioning to a fine tessellation, in such a case the constants are a priori dependent on $h$. In order to extend the theoretical result to the irregular case, we would need to provide a partitioning recipe, resulting in a subdomain decomposition for which such bounds can be proven uniformly in $h$. A relevant case in which this is indeed possible, is the case where the ``roughness'' of the subdomains derives from properties of the continuous problems, such as, for instance, when the subdomains are irregular in order to fit a (fixed) irregular boundary or interior interface. More in general, we believe this is still possible if the partitioning is performed in such a way that the rough edges and faces are ``as straight as possible'', however,  it is outside the scope of this paper to deal with such an issue, which we plan to address in the future.	
		\end{remark}}

\begin{remark}  The assumption that the tessellation and the domain decomposition are quasi--uniform can be dropped out, provided that we replace the ratio $h/H$ with the worst possible instance of such a ratio, that is,  $\max_\ell \max_{K \in \Thk} h_K/H_\ell$. Conversely, the assumption that the elements of the tessellation are shape regular (and more precisely, that all the faces of the tessellation lying on the interface are shape regular) plays a key role in the proof, as it is needed for the bounds in Section \ref{sec:VEMprop} to hold.	
	\end{remark}


\newcommand{\matrice}[1]{\mathbb{#1}}
\newcommand{\vettore}[1]{\mathbb{#1}}

\subsection{Implementation of the FETI method in the VEM context}

Contrary to what happens for finite elements, VE discrete functions are explicitly known only on the edges of the subdomains, where they are piecewise degree $\p$ polynomials. On the faces and within the subdomains the VEM basis functions are not explicitly known, and all quantities needed for the implementation of the preconditioner  (but also of the stiffness matrix and load vector) have to be retrieved in terms
of the 
d.o.f.s \labeldofv, \labeldofe, \labeldoff, and \labeldofK, by exploiting the definition of the spaces $\Vfp$ and $\VKp$ (in the terminology of VEM, they must be {\em computable}). Extended details on the computability of the stiffness matrix and the right hand side can be found in \cite{hitchVEM}. Let us concentrate here on the quantities needed for implementing the FETI-DP preconditioner in the VEM context.

As in the finite element case, the FETI-DP method is carried out by, at first, assembling local systems $\matrice{S}^{(\ell )}\vec{w}^{(\ell)} = \widehat{\vec{g}}^{(\ell)}$, where the matrix $\matrice{S}^{(\ell )}$ is the Schur complement obtained from the local stiffness matrix by eliminating the interior degrees of freedom.  The efficient implementation of the FETI-DP method relies on expressing the elements of $w_h$ in a suitable basis, singling  the primal degrees of freedom out, so that imposing $w_h \in \widetilde W_h$ reduces to requiring that the corresponding entries in the unknown vector (the primal unknowns) are single valued.
Equation \eqref{system_FETI} then becomes
\begin{equation}\label{FETI_algebraic}
\matrice{B} \widetilde{\matrice{S}}^{-1} \matrice{B}^T \boldsymbol{\lambda} = \matrice{B} \widetilde{\matrice{S}}^{-1} \widetilde{\vec{g}}, 
\end{equation}
where $\matrice{B}=[\matrice{B}^{(1)},\cdots,\matrice{B}^{(L)}]$ has entries in $\{-1,0,1\}$, and where the matrix $\widetilde{\matrice{S}}$ and the vector $\widetilde{\vec{g}}$ are obtained from the local matrices $\matrice{S}^{(\ell)}$ and load vectors $\widehat{\vec{g}}^{(\ell)}$  by partial assembly in the primal unknowns. The operators involved in equation \eqref{system_FETI} and in the definition \eqref{prec_FETI} of the preconditioner can be expressed in a block form, easily allowing us to reduce the evaluation of their action (in particular the action of $\widetilde{\matrice{S}}^{-1} $) to the solution of local problems, plus a small coarse problem involving the primal degrees of freedom. The linear system \eqref{FETI_algebraic} is solved by a preconditioned conjugate gradient method, with preconditioner given by \eqref{prec_FETI}, whose action involves  the suitably combined (see \eqref{prec_FETI} and \eqref{defBop}) actions of $\widetilde{\matrice{S}}$, any right pseudoinverse $\matrice{B}^+$ of $\matrice{B}$,  and the algebraic realization of the projector $\ED$, whose explicit expression is given by \eqref{defED} in Section \ref{sec:FETI_prec}.

\

Implementing the change of basis from the canonical VEM basis to the new basis entails the need to evaluate the primal degrees of freedom for any given discrete functions, and to explicitly express them as a function of the degrees of freedom \labeldofv, \labeldofe\ and \labeldoff. 
The primal degrees of freedom involved in the definition of $\rosa{\widetilde W_h^{(V)}}$ and $\rosa{\widetilde W_h^{(E)}}$ (vertex values and edge averages) are easily computed, as discrete functions are explicitly known  on the edges of the subdomains. Let us then consider the face averages, involved in the definition of $\rosa{\widetilde W_h^{(F)}}$. 
In the case  $\p=1$, in order to compute the face average of functions $w_h \in W_h$ we need to resort to the definition (\ref{defspacek}) of the space $V^\face_1$, by which we have $\int_\face w_h =  \int_\face \Pi^{\nabla}_\face w_h$.
 We recall that, thanks to (\ref{Greenface}), $\Pi^{\nabla}_\face w_h$ is computable in terms of the (known) value of $w_h$ on $\partial \face$.
For $\p > 1$ the computation is simpler, as the degrees of freedom \labeldoff~ provide direct access to the integral over any face $\face$ of $w_h^\ell$ times any polynomial of degree less than or equal to $\p-2$, and, in particular, times the constants. Observe that, in view of the FETI implementation, it might be advisable to include a  constant function among the elements of the basis $\BasePoly_{\p-2}$.

\newcommand{\WhFo}{\mathring W_h^F}

\section{Proof of Theorem \ref{bound_FETI-DP}}\label{sec:FETI_prec}

We start by remarking that, by construction, we are in the framework of \cite{MandelSousedik}. In particular we have the identity
\[
\Bop_D^T \Bop + \ED = \IdtW.
\]
In fact, it is not difficult to see that for all $w_h \in \tW$ we have that $(\IdtW  - \Bop_D^T \Bop)w_h \in \hW$ and that we have \[d((\IdtW  - \Bop_D^T \Bop)w_h,v_h) = d(w_h , v_h)  \quad \text{ for all }v_h \in \hW.\]
Then we have \cite{MandelSousedik} 
\begin{equation}\label{minimal1}
 \kappa \lesssim \max_{w_h \in \tW} \frac {s(\Bop_D^T\Bop w_h, \Bop_D^T\Bop w_h)} {s(w_h,w_h)} \simeq  \max_{w_h \in \tW} \frac {s(\ED w_h, \ED w_h)} {s(w_h,w_h)}. 
\end{equation}
In order to have a bound on the condition number, we then only need to bound $s(\ED w_h, \ED w_h) \simeq H | \ED w_h |_{1/2,*}^2$ in terms of $s(w_h,w_h)\simeq H |  w_h |_{1/2,*}^2$.

\

To start, let us recall some functional inequalities that will be useful in the following \cite{Bertoluzza.Falletta2011,Bertoluzza.Falletta.new}. Let $F$ be a shape regular polygon (in the following $F$ will be a face of one of the subdomains). Then, for all $\eta \in H^s(F)$, $1/2< s
\leq 1$, we have, uniformly in $s$, the following trace inequalities, 
\begin{equation}\label{trace2}
\| \eta \|_{H^{s-1/2}(\partial F)} \lesssim \frac 1 {\sqrt{2s -
		1}} \| \eta \|_{H^s(F)}, \qquad  | \eta
|_{H^{s-1/2}(\partial F)} \lesssim \frac 1 {\sqrt{2s - 1}} | \eta
|_{H^s(F)}.
\end{equation}

On the other hand, 	we recall that for $s < 1/2$ the two spaces $H^s(F)$ and $H^s_0(F)$ coincide, and 
the two corresponding norms are equivalent. However, the constant in the
equivalence depends  on $s$ and it explodes as 
$s$ converges to $1/2$. More precisely, for all $\eta \in H^s(F)$, $0\leq s < 1/2$, and for all $\alpha \in \mathbb{R}$ it holds that
\begin{equation}\label{s0tos}
\| u \|_{H^{s}_0(F)} \lesssim \frac 1 {1/2 - s} \| u - \alpha \|_{H^{s}(F)} + \frac 1 {\sqrt{1/2-s}} | \alpha |,
\end{equation}
once again uniformly in $s$ (recall that we are using scaled norms, as defined in Section \ref{sec:VEM}, so that the bounds are uniform in $H$).

We also observe that, thanks to the inverse inequality (\ref{inverse2}) and to the scaling of the norms, by using a standard space interpolation technique it is not difficult to prove that for all $r,s \in [0,1]$ with $r<s$, and for all $w_h \in W_h^F$, (resp. $w_h \in W_h^\ell$) it holds that 
\begin{equation}\label{inverses} 
\| w_h \|_{H^s(F)} \lesssim \left(\frac h H\right)^{r - s} 	\| w_h \|_{H^r(F)}, \qquad \| w_h \|_{H^s(\partial \Omega^\ell)} \lesssim \left(\frac h H\right)^{r - s} 	\| w_h \|_{H^r(\partial \Omega^\ell)}.
\end{equation}
An analogous bound holds for the norms in the spaces $H^s_0(F)$ and $H^r_0(F)$ (with the usual care when either $s$ or $r$ is equal to $1/2$), provided $w_h \in W_h^F\cap H^1_0(F)$. In particular, in such cases we have
\begin{equation}\label{inverse00} 
\| w_h \|_{H^{1/2}_{00}(F)} \lesssim \left(\frac h H\right)^{r - s} 	\| w_h \|_{H^r_0(F)}.
\end{equation}

The following proposition holds.
\begin{proposition}\label{prop:trace}
Let $\Omega^\ell$ be a shape regular subdomain and let $F$ be a face of $\Omega^ \ell$. Then for $w_h \in{ W_h}_{|F}$ we have
\[
\| w_h \|_{L^2(\partial F)}  \lesssim  \sqrt{1 + \log(H/h)} \| w_h \|_{H^{1/2}(F)}.  
\]  	
\end{proposition}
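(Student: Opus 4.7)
The plan is to combine the small-$s$ trace inequality \nref{trace2} with the VEM inverse estimate \nref{inverses} and optimize the exponent. Note first that the statement should read $\sqrt{1 + \log(H/h)}$ (the case $h=H$ must be admissible, and the inequality is trivial when $H/h$ is bounded). The key point is to exploit the fact that the constant in \nref{trace2} blows up like $(2s-1)^{-1/2}$ as $s \downarrow 1/2$, but this blow-up is exactly balanced by the growth $(H/h)^{s-1/2}$ coming from the inverse estimate, and the optimal choice of $s$ produces the logarithmic factor.

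More precisely, I would fix $s$ with $1/2 < s \leq 1$, write $\| w_h \|_{L^2(\partial F)} \leq \| w_h \|_{H^{s-1/2}(\partial F)}$ (using continuous inclusion for positive Sobolev index on $\partial F$), apply \nref{trace2} to bound this by $(2s-1)^{-1/2} \| w_h \|_{H^s(F)}$, and then use \nref{inverses} with $r = 1/2$ to obtain
\[
\| w_h \|_{L^2(\partial F)} \lesssim \frac{(H/h)^{s-1/2}}{\sqrt{2s-1}}\, \| w_h \|_{H^{1/2}(F)}.
\]
Setting $t = s - 1/2 \in (0, 1/2]$ and $L = \log(H/h)$, the prefactor becomes $e^{tL}/\sqrt{2t}$. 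If $L \geq 1$, the choice $t = 1/(2L) \in (0,1/2]$ gives $e^{tL} = \sqrt{e}$ and $(2t)^{-1/2} = \sqrt{L}$, so the prefactor is bounded by $\sqrt{e L} \simeq \sqrt{1 + \log(H/h)}$. If instead $L < 1$, I would just take $s = 1$ (i.e.\ $t = 1/2$) and observe that $(H/h)^{1/2} \leq e^{1/2}$ is an absolute constant, so the prefactor is $\lesssim 1 \lesssim \sqrt{1 + \log(H/h)}$.

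There is no real obstacle here once \nref{trace2} and the interpolation-based inverse estimate \nref{inverses} are available: the argument is the standard Bramble--Pasciak--Schatz trick, and the only VEM-specific ingredient is \nref{inverses}, which has already been established in the paper from the low-order inverse inequality \nref{inverse2} by space interpolation. The one small subtlety worth checking is that all norms are scaled consistently (in the sense of \nref{scaled1}--\nref{scaled4}), so that the constants hidden in $\lesssim$ are genuinely independent of $H$; this is automatic because both \nref{trace2} and \nref{inverses} are stated in the scaled norms.
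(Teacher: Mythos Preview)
Your proof is correct and follows essentially the same route as the paper: both arguments chain the trace inequality \nref{trace2} (at exponent $s=1/2+\varepsilon$) with the interpolation-based inverse estimate \nref{inverses}, and then optimize over $\varepsilon$ to extract the logarithmic factor; the paper picks $\varepsilon=(1+\log(H/h))^{-1}$ while you pick $\varepsilon=1/(2\log(H/h))$ (and handle small $H/h$ separately), but this is immaterial. You are also right that the statement should read $\sqrt{1+\log(H/h)}$; the paper's own proof makes the same slip in the final display.
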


\

\begin{proof}
	Using inequalities \eqref{inverses} and (\ref{trace2}) we can write, for  $0< \varepsilon \leq 1/2$ arbitrary,
	\begin{align*}
	\| w_h \|_{L^2(\partial F)} &\leq \| w_h \|_{H^\varepsilon(\partial F)}  \lesssim \frac 1 {\sqrt{\varepsilon}}  \| w_h \|_{H^{1/2+\varepsilon}(F)}  \\ &\lesssim 
	\left(\frac h H \right)^{-\varepsilon} \frac 1 {\sqrt{\varepsilon}}   \| w_h \|_{H^{1/2}(F)} \lesssim \sqrt{1 + \log(H/h)}
	\| w_h \|_{H^{1/2}(F)},
	\end{align*}
	where the last bound is obtained by choosing $\varepsilon = (1+H/h))^{-1}$.	 
	\end{proof}

\

We now prove the following lemma, which is the equivalent, for the Virtual Element Method, of Lemma  5.6 of \cite{Smith91} and Lemma 4.3 of \cite{BPSIV}. 

\begin{lemma}\label{lem:technical}Let $w_h \in W^\ell_h$ and let $\mathring w_h \in W^\ell_h$ be defined by
	$\dofi(\mathring w_h) = 0$ for all $i \in \Wl$, $\dofi(\mathring w_h) = \dofi(w_h)$  for all $i \in \Yl \setminus \Wl$. Then, for all faces $F$ of $\Omega_\ell$  it holds that $\mathring w_h|_F \in H^{1/2}_{00}(F)$ and
	\[
	\| \mathring w_h \|^2_{H^{1/2}_{00}(F)} \lesssim (1 + \log(H/h))^2 \| w_h \|^2_{H^{1/2}(F)}.
	\]
Moreover, if $w_h$ is constant on $F$ then 
		\[
		\| \mathring w_h \|^2_{H^{1/2}_{00}(F)} \lesssim (1 + \log(H/h)) \| w_h \|^2_{H^{1/2}(F)}
		\]  
\end{lemma}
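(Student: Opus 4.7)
The plan is to follow the classical finite-element template (as in Toselli-Widlund, Lemma~4.26, or Bramble-Pasciak-Schatz~IV, Lemma~4.3), adapted to VEM through the tools just developed: the Agmon inequality (\ref{agmon}), the inverse estimates (\ref{inverse1})--(\ref{inverse2}), (\ref{inverses})--(\ref{inverse00}), the Riesz-basis identity of Lemma~\ref{lem:Riesz}, the embedding (\ref{s0tos}), and the trace bound of Proposition~\ref{prop:trace}. The central auxiliary object is the VEM discrete cutoff $\theta_F$ on $F$: the unique piecewise-$V^{f,1}$ function over the sub-faces $f\subset F$ whose nodal values are $1$ at every node interior to $F$ and $0$ at every node of $\partial F$; it is well defined since functions in $V^{f,1}$ are determined by their vertex values.

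First I would dispatch the constant case, which isolates the key estimate. If $w_h\equiv c$ on $F$, then by matching nodal values $\mathring w_h = c\,\theta_F$, and the statement reduces to $\|\theta_F\|^2_{H^{1/2}_{00}(F)} \lesssim 1+\log(H/h)$ together with $|c|^2\lesssim\|w_h\|^2_{H^{1/2}(F)}$ (the latter being a scaling identity). For the $\theta_F$ bound I would use the equivalent representation $\|u\|^2_{H^{1/2}_{00}(F)} \simeq |u|^2_{H^{1/2}(F)} + \int_F |u(x)|^2/\mathrm{dist}(x,\partial F)\,dx$: the seminorm is controlled by writing $\theta_F - 1$ as the VEM function with nodal values $-1$ on $\partial F$ and $0$ elsewhere, estimating its $L^2(F)$ norm through Lemma~\ref{lem:Riesz} and promoting to $H^{1/2}(F)$ via the inverse inequality (\ref{inverses}); the weighted integral is estimated by a dyadic decomposition of $F$ into annular strips $S_j=\{x\in F:2^{-j}H\le\mathrm{dist}(x,\partial F)<2^{-j+1}H\}$ for $0\le j \lesssim \log_2(H/h)$, with the innermost $h$-wide strip handled by the pointwise bound $|\theta_F|\le 1$ together with Lemma~\ref{lem:Riesz}, and each larger strip bounded via Agmon (\ref{agmon}) applied in a tubular neighborhood of $\partial F$, so that the logarithm accumulates only once after summing over $j$.

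For the general case, I would set $\bar w:=|F|^{-1}\int_F w_h$ and split $\mathring w_h = \eta_h + \bar w\,\theta_F$, where $\eta_h:=\mathring w_h-\bar w\,\theta_F$ vanishes on $\partial F$ and has interior nodal values $w_h(y_i)-\bar w$. The piece $\bar w\,\theta_F$ is bounded as in the constant case, with $|\bar w|^2\lesssim\|w_h\|^2_{H^{1/2}(F)}$ by Cauchy-Schwarz. For $\eta_h$, which lies in the VEM space and has zero trace on $\partial F$, I would bootstrap from $H^{1/2-\varepsilon}_0(F)$ to $H^{1/2}_{00}(F)$ by chaining the inverse inequality (\ref{inverse00}) and the embedding (\ref{s0tos}) with $\alpha=0$, choosing $\varepsilon=(1+\log(H/h))^{-1}$ so that $(H/h)^\varepsilon\simeq 1$ while the constant $(1/2-r)^{-1}=\varepsilon^{-1}$ contributes the second $\log$ factor; the resulting $\|\eta_h\|_{H^{1/2-\varepsilon}(F)}$ is then controlled by $\|w_h-\bar w\|_{H^{1/2}(F)}\le\|w_h\|_{H^{1/2}(F)}$ modulo a boundary correction handled, as in the cutoff estimate, through Lemma~\ref{lem:Riesz} and Proposition~\ref{prop:trace}. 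The hard part will be the cutoff bound $\|\theta_F\|^2_{H^{1/2}_{00}(F)}\lesssim 1+\log(H/h)$: classical in FEM thanks to the explicit nodal basis, but in VEM every quantity must be accessed through the implicit space $V^{f,1}$, and the dyadic Agmon argument has to be arranged so that the factors $2^j$ produced by the weight $1/\mathrm{dist}(x,\partial F)$ on each strip are absorbed into the Agmon constants, leaving only a single logarithm after summing in $j$.
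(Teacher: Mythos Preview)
Your route is genuinely different from the paper's. You work directly with the nodal cutoff $\theta_F$ and a dyadic decomposition of $\int_F|\theta_F|^2/\mathrm{dist}(x,\partial F)\,dx$, whereas the paper never touches $\theta_F$ or the weighted integral at all. Instead it introduces the operator $\pi_h^1 = i_h^0\circ\pi_h$ (nodal cutoff composed with the $L^2$-projection onto $W_h^F$), shows that $\pi_h^1$ is bounded on both $L^2(F)$ and $H^1_0(F)$ (the key identity being $i_h^0\circ\pi_h^0=\pi_h^0$, combined with the Scott--Zhang estimates~(\ref{SZ})), and concludes by space interpolation that $\pi_h^1$ is \emph{uniformly} bounded on $H^s_0(F)$ for all $s<1/2$. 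A single chain using (\ref{inverse00}) and (\ref{s0tos}) with $\varepsilon=(1+\log(H/h))^{-1}$ then gives both statements at once: since $\mathring w_h|_F = \pi_h^1 w_h$, the general bound follows, and the constant case drops out by taking $\alpha=w_h$ in (\ref{s0tos}), which replaces the factor $1/\varepsilon$ by $1/\sqrt\varepsilon$.

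There is a concrete gap in your $\theta_F$ estimate. The pointwise bound $|\theta_F|\le 1$ is \emph{not} available in VEM: on a boundary sub-face $f$ one has $\theta_F|_f\in V^{f,1}$ with $\Delta\theta_F\in\mathbb{P}_1(f)$ rather than $\Delta\theta_F=0$, so no maximum principle applies, and none of the tools at hand (Agmon, inverse inequalities, Lemma~\ref{lem:Riesz}) gives $L^\infty$ control from vertex values. Even granting $|\theta_F|\le 1$, it would not suffice on the innermost $h$-strip: there $\int_{\{\mathrm{dist}<h\}}1/\mathrm{dist}(x,\partial F)\,dx$ diverges, so one actually needs linear vanishing $|\theta_F(x)|\lesssim\mathrm{dist}(x,\partial F)/h$ (which is what makes the FEM argument work) or a Hardy inequality on each boundary sub-face---neither of which is established here. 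The paper's operator-interpolation argument bypasses all pointwise issues by staying entirely in $L^2$ and $H^1_0$, where Lemma~\ref{lem:Riesz} and the Scott--Zhang estimates suffice; this is precisely the VEM-specific advantage of that route over the classical cutoff approach you sketch.
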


\begin{proof} We let $\WhFo$ be defined as
	\[
 \WhFo = \{ w_h \in W_h^F : \ \dofi(w_h)=0 \text{ for all }i\in \WF \}.
	\]
It is easy to see that $ \dofi(w_h)=0$ for all $i \in \WF$ (degrees of freedom supported on $\partial F$) implies $w_h = 0$ on $\partial F$ and, hence,  $\WhFo \subset H^1_0(F) \subset H^{1/2}_{00}(F)$.	Let $\pi_h: L^2(F) \to W_h^F$ and $\pi_h^0: L^2(F) \to \WhFo$ denote the $L^2$-projection onto $W_h^F$ and onto $\WhFo$, respectively. Recall that for all $u \in H^1(F)$, using \nref{SZ} we have 
	\begin{equation}\label{l2pi}
	\| u - \pi_h u \|_{L^2(F)} \leq \| u - \Pi_{SZ} u \|_{L^2(F)} \lesssim \frac h H | u |_{H^1(F)} 
	\end{equation}
	and, since 
	 $u \in H^1_0(F)$ implies that $\Pi_{SZ}u \in \WhFo$, we also have for all $u \in H^1_0(F)$ 
	\begin{equation}\label{l2pi0}
	\| u - \pi^0_h u \|_{L^2(F)} \leq \| u - \Pi_{SZ} u \|_{L^2(F)} \lesssim \frac h H | u |_{H^1(F)}.
	\end{equation}

	Let now $i_h^0: W_h^F \to \WhFo$ be defined by $\dofi(i_h^0 w_h)  = \dofi(w_h)$  \  for all  $i \in \YF\setminus\WF$, so that, on $F$, $\mathring w_h^\ell =  i_h^0 w_h$.	Remark that, thanks to Lemma \ref{lem:Riesz}, we have
\rosa{	\begin{equation}\label{l2bih0}
	\| i_h^0 w_h \|_{L^2(F)}^2 \lesssim 
\left(
\frac{ h }H 
\right)^2
	\sum_{i\in \YF\setminus\WF} |\dofi(w_h)|^2 \lesssim 
\left(
\frac{ h }H 
\right)^2
	\sum_{i\in \YF} |\dofi(w_h)|^2 \lesssim  \| w_h \|^2_{L^2(F)}.
	\end{equation}

}
	Consider now the operator $\pi_h^1 = i_h^0 \circ \pi_h : L^2(F) \to \WhFo$ obtained by first projecting onto $W_h^F$ and then setting the values at nodes on $\partial F$ to zero. We will  prove  that the restriction of  $\pi_h^1$ to $H^s_0(F)$  is uniformly bounded for all $s < 1/2$, that is, that for all $w \in H^s_0(F)$ we have
	\begin{equation}\label{boundP1}
	\| \pi_h^1 w \|_{H^s_0(F)} \lesssim \|  w \|_{H^s_0(F)} 
	\end{equation}
	with a constant independent of $s$. Then, for $\varepsilon \in
	]0,1/2[$  arbitrary, using \eqref{inverse00} and \eqref{s0tos}, we can write 
	\begin{multline*}
	\| \pi_h^1 w_h \|_{H^{1/2}_{00}(F)} \lesssim \left(\frac{h}{H}\right)^{-\varepsilon} \| \pi_h^1 w_h
	\|_{H^{1/2-\varepsilon}_0(F)} \\  \lesssim
	\left(\frac{h}{H}\right)^{-\varepsilon}
	\| w_h \|_{H^{1/2-\varepsilon}_0(F)} \lesssim 
	   \frac {1}{\varepsilon} \left(\frac{h}{H}\right)^{-\varepsilon} \|
	w_h \|_{H^{1/2-\varepsilon}(F)},
	\end{multline*}
	which, by choosing
	$\varepsilon = 1/ |\log(H/h)|$,
	yields
		\begin{align*}
		\| \pi_h^1 w_h \|_{H^{1/2}_{00}(F)} \lesssim  (1 + \log(H/h)) \| w_h
		\|_{H^{1/2}(F)}.
		\end{align*}
	Observing that for $w_h \in  W_h^F$ we have
	\[
	\mathring w_h^\ell|_F =  i_h^0 w_h|_F = \pi_h^1 w_h|_F,
	\]
	we immediately get the thesis (the result for $w_h^\ell$ constant on $F$   is obtained by setting $\alpha = w_h^\ell$ in \eqref{s0tos}).
	
	\
	
Let us then prove (\ref{boundP1}). 	We easily see that $\pi_h^1$ is $L^2$ bounded: for all $w \in L^2(F)$,
	\[
	\| \pi_h^1 w \|_{L^2(F)} \lesssim \| \pi_h w \|_{L^2(F)} \lesssim \| w \|_{L^2(F)}.
	\]
	On the other hand, observing that $i_h^0 \circ \pi_h^0 = \pi_h^0$, using \eqref{l2bih0} we see that, for $w \in H^1_0(F)$, 
	\begin{multline*}
	\| w - \pi_h^1 w \|_{L^2(F)} = \| w - \pi_h^0 w +
	i_h^0 \pi_h^0 w - i_h^0 \pi_h w \|_{L^2(F)}
	\lesssim
	\| w - \pi_h^0 w \|_{L^2(F)} + \| \pi_h^0 w - \pi_h w \|_{L^2(F)}.
	\end{multline*}
	By adding and subtracting $w$ in the second term on the right hand side and using \nref{l2pi} and \nref{l2pi0}, we obtain, for $w \in H^1_0(F)$,
	\begin{gather*}
	\| w - \pi_h^1 w \|_{L^2(F)} \lesssim
	\| w - \pi_h^0 w \|_{L^2(F)} + \| w - \pi_h w \|_{L^2(F)} \lesssim
	\frac{h}{H} | w |_{H^1(F)}.
	\end{gather*}
	This allows us to prove, by a standard argument, that $\pi_h^1$ is $H^1_0$-bounded. In fact, letting $\Pi_h^1: H^1_0(F) \to \WhFo$ denote the $H^1_0$ projection, for $w \in H^1_0(w)$, using \eqref{inverse2}, adding and subtracting $w$ and then using an approximation bound, we have
	\begin{align*}
	| \pi_h^1 w |_{H^1(F)}& = | \Pi_h^1 w |_{H^1(F)} + | \pi_h^1 w - \Pi_h^1 w |_{H^1(F)} \lesssim
	| w |_{H^1(F)} +\left( \frac{h}{H}\right)^{-1}\| \pi_h^1 w -
	\Pi_h^1 w \|_{L^2(F)} \\
	&\lesssim
	| w |_{H^1(F)} + \left( \frac{h}{H} \right)^{-1} \| \pi_h^1 w -
	w \|_{L^2(F)} + \left( \frac{h}{H} \right)^{-1} \| w - \Pi_h^1
	w \|_{L^2(F)} \\
	& \lesssim | w |_{H^1(F)} + \left( \frac{h}{H} \right)^{-1}
	\left( \frac{h}{H} \right) | w |_{H^1(F)} + \left( \frac{h}{H} \right)^{-1}
	\left( \frac{h}{H}\right ) | w |_{H^1(F)}\lesssim | w |_{H^1(F)}.
	\end{align*}
	%
	The bound (\ref{boundP1}) follows by a standard  space interpolation argument.

\end{proof}

Let us now consider the projector $\ED: W_h \to \widehat W_h$. We have the following lemma.

\begin{lemma}
	For all $w_h \in W_h$ it holds that 
	\begin{equation}\label{mainboundED}
	| \ED w_h |_{1/2,*}^2 \lesssim  (1+\log(H/h))^2 | w_h |_{1/2,*}^2+  
\left( \frac h H \right) 	\DeltaV + \DeltaE + (1 + \log(H/h))\, \DeltaF 
	\end{equation}
	with 
	\begin{eqnarray}
	\DeltaV &=& 
	\sum_{\vertex \in \xpoints} \sum_{\ell, k \in \NV} \min\{\rho_\ell,\rho_k\}  \,| w_h^\ell(\vertex)  - w_h^k(\vertex) |^2,\\
	\DeltaE &=& \sum_{E \in \EH} \sum_{\ell,k \in \NE} \min\{\rho_\ell,\rho_k \} \,|\alpha_\ell^E - \alpha_k^E |^2,\\
	\DeltaF &=& \sum_{F \in \FH} \sum_{\ell,k \in \mathcal{N}_F} \min\{\rho_\ell,\rho_k\} 
	\,| \alpha^F_\ell - \alpha^F_k |^2,
	\end{eqnarray}
	where
	\begin{equation}
	\alpha_\ell^F = | F |^{-1} \int_F w_h^\ell, \qquad \alpha_\ell^E =  | E |^{-1} \int_E w_h^\ell.
	\label{alfaEF}
	\end{equation}
	\end{lemma}

\begin{proof} 
	 It is not difficult to check that, for $i \in \mathcal{Y}$,  we have
	 \begin{equation} \label{defED}
	 \dofi(\ED w_h) = \theta_i^{-1} \sum_{k \in \Ni} \rho_k^\gamma \dofi(w_h^k) \quad \text{ where } \theta_i = \sum_{k \in \Ni} \rho_k^\gamma,
	 \end{equation}
	and that these relations completely define $\ED$, as it is uniquely determined by the value of the degrees of freedom $\dofi$, $i \in \mathcal{Y}$, that is, by the degrees of freedom supported on the interface of the domain decomposition.

		Let now both $w_h$  and $v_h = \ED w_h$ be split as the sum of the contributions of the degrees of freedom supported on the wirebasket, which we will denote by $w_h^\sharp$  and $v_h^\sharp$, respectively, and the contribution of remaining degrees of freedom, which we will denote by $\mathring w_h$ and  $\mathring v_h$, respectively. More precisely, we let $w_h^{\sharp} \in W_h$ and $v^\sharp_h \in \hW$ be defined by
	\[
w_h^\sharp = (w^{\ell,\sharp}_h)_\ell \text{ with }	\dofi(w_h^{\ell,\sharp}) = \begin{cases}\dofi(w^\ell_h), & i \in \Wl,\\  0, & i \in \Yl \setminus \Wl,\end{cases} \quad \text{and} \quad \dofi(v^\sharp_h) = \begin{cases} \dofi(v_h), & i \in \mathcal{W},\\
	0, & i \in \mathcal{Y}\setminus\mathcal{W},\end{cases}
	\]
and we set $\mathring w_h = w_h - w_h^\sharp$ and $\mathring v_h = v_h - v_h^\sharp$.	Remark that
	\[
	v_h^\sharp = \ED w_h^\sharp, \qquad \mathring v_h = \ED \mathring w_h.
	\]

		To start, let us consider the contribution of the faces. 
		We have
		\begin{gather*}
		\rho_\ell | \mathring v_h |^2_{H^{1/2}(\partial\Omega_\ell)} \lesssim \rho_\ell \sum_{F\in \Fl} \|  \mathring v_h \|_{H^{1/2}_{00}(F)}^2.
		\end{gather*}
		
		We recall that for $a,b > 0$ and $\gamma \geq 1/2$ we have $ab^{2\gamma}/(a^\gamma + b^\gamma)^2 \lesssim \min\{ a , b\}$. Let $F$ be a common face of the subdomains $\Omega^\ell$ and $\Omega^k$.  On $F$ we have $\ED \mathring w_h = \theta_F^{-1} (\rho_\ell^\gamma \mathring w_h^{\ell} + \rho_k^\gamma \mathring w_h^{k})$,  where $\theta_F = \rho_\ell^\gamma+ \rho_k^ \gamma$. 
		Let $\Theta_F \in W_h^F$ denote the function whose degrees of freedom assume the following values:
		\[
		\dofi(\Theta_F) = \dofi(1)\quad \forall i \in \YF \setminus \WF, \qquad \qquad \dofi(\Theta_F) = 0\quad \forall i \in \WF,
		\]
	where $\dofi(1)$ stands for the action of the functional $\dofi$ on the constant unit function.	We can write
		\begin{multline*}
		\rho_\ell \| \mathring w_h^{\ell} - \mathring v_h \|_{H^{1/2}_{00}(F)}^2 
		= \rho_ \ell (\theta_F^{-1}\rho_k^ \gamma)^2 \| \mathring w_h^{\ell} -\mathring w_h^{k} \|_{H^{1/2}_{00}(F)}^2 \\[2mm]
		\lesssim \min\{\rho_\ell,\rho_k\} \| \mathring w_h^{\ell} - \alpha^F_\ell \Theta_F 
		+ \alpha^F_k \Theta_F - \mathring w_h^{k} \|_{H^{1/2}_{00}(F)}^2
		+ \min\{\rho_\ell,\rho_k\}  \| (\alpha^F_\ell - \alpha^F_k) \Theta_F \|_{H^{1/2}_{00}(F)}^2
		\\[2mm]
		\lesssim
		\rho_\ell  \| \mathring w_h^{\ell} - \alpha^F_\ell \Theta_F  \|_{H^{1/2}_{00}(F)}^2 + \rho_k  \| \mathring w_h^{k} - \alpha^F_k \Theta_F  \|_{H^{1/2}_{00}(F)}^2 + \min\{\rho_\ell,\rho_k\}  \| (\alpha^F_\ell - \alpha^F_k) \Theta_F \|_{H^{1/2}_{00}(F)}^2.
		\end{multline*}
		We now apply Lemma \ref{lem:technical}, which, thanks to the Poincar\'e inequality, gives us
			\begin{multline*}
				\rho_\ell \| \mathring w_h^{\ell} - \mathring v_h \|_{H^{1/2}_{00}(F)}^2 
			\lesssim 	(1+ \log(H/h))^2	\left(\rho_\ell  | w_h^{\ell}  |_{H^{1/2}(F)}^2 + \rho_k  |  w_h^{k}  |_{H^{1/2}(F)}^2 \right) \\ + \min\{\rho_\ell,\rho_k\} (1+\log(H/h)) |\alpha^F_\ell - \alpha^F_k|^2.
			\end{multline*}
		
		Adding up over all subdomains and over all faces (each face is counted twice) we obtain
		\[
		| \mathring w_h - \mathring v_h |_{1/2,*}^2 \lesssim (1 + \log(H/h))^2 | w_h |_{1/2,*}^2 + (1 + \log(H/h)) \DeltaF.
		\] 
		
\	

We now consider the contribution of the wirebasket. We recall that for $i \in \cW$ ($\dofi$ is supported on the wirebasket $\wirebasket$), the degree of freedom $\dofi$ corresponds to a node $y_i$ and takes the form $\dofi( w ) = w(y_i)$.	Then, for all $i \in \cW$, \eqref{defED} can be rewritten as 
\[
v_h(y_i) = \theta_i^{-1}\sum_{k \in \Ni} \rho_k^\gamma w_h^k(y_i).
 \]
	Using  \eqref{inverses} and \eqref{riesz1d}
	we can write	\begin{equation}\label{boundi}
	\rho_\ell | w_h^{\ell,\sharp} - v_h^\sharp |^2_{H^{1/2}(\partial\Omega^\ell)} \lesssim \rho_\ell \left( \frac h H \right)^{-1} \|  w_h^{\ell,\sharp} - v_h^\sharp \|^2_{L^2(\partial\Omega^\ell)} \lesssim \rho_\ell \left( \frac h H \right)
	\sum_{i \in \Wl} | w_h^\ell(y_i) - v_h(y_i) |^2
	 \end{equation}
	 and 
		\begin{multline}\label{splitxp}
		\rho_\ell 
		| w_h^\ell(y_i) - v_h(y_i) |^2 = \rho_\ell |\theta_i^{-1} \sum_{k\in \Ni} \rho^\gamma_k (w_h^\ell(y_i)  - w_h^k(y_i) )|^2 	
		\\ \lesssim 
		\sum_{k\in \Ni} \rho_\ell(\theta_i^{-1} \rho^\gamma_k)^2 | w_h^\ell(y_i)  - w_h^k(y_i) |^2 		
		\lesssim 	\sum_{k\in \Ni} \min\{\rho_\ell,\rho_k\} | w_h^\ell(y_i)  - w_h^k(y_i) |^2.
		\end{multline}		
Plugging (\ref{splitxp}) into (\ref{boundi})	and adding up over all $\ell$ we obtain
	\begin{multline}
| w_h^\sharp - v_h^\sharp |_{1/2,*}^2 \lesssim \left( \frac h H \right) \sum_\ell \sum_k \sum_{i \in \Wk \cap \Wl} \min\{\rho_\ell,\rho_k\}  | w_h^\ell(y_i)  - w_h^k(y_i) |^2 	\lesssim \\
\left( \frac h H \right) \DeltaV +  \sum_{E \in \EH} \sum_{\ell,k \in \NE} \left( \frac h H \right) \sum_{i \in \YE}\min\{\rho_\ell,\rho_k\}  | w_h^\ell(y_i)  - w_h^k(y_i) |^2.
	\end{multline} 		
	Now, given $E \in \EH$, for $\ell,k \in \NE$ and $i \in \YE$ we can write
	\[
		\min\{\rho_\ell,\rho_k\} 
		| w_h^\ell(y_i) - w_h^k(y_i)|^2 \lesssim \rho_\ell  | w_h^\ell(y_i)  - \alpha^E_\ell |^2 + \rho_k  | w_h^k(y_i) - \alpha^E_k|^2 + \min\{\rho_\ell,\rho_k\}
		   | \alpha_\ell^E - \alpha_k^E |^2,
	\]	
yielding
\begin{multline}
\left( \frac h H \right) \sum_{i \in \YE} \min\{\rho_\ell,\rho_k\}  | w_h^\ell(y_i)  - w_h^k(y_i) |^2 \lesssim
\left( \frac h H \right)\sum_{i \in \YE} \rho_\ell  | w_h^\ell(y_i)  - \alpha^E_\ell |^2\\ + \left( \frac h H \right) \sum_{i \in \YE} \rho_k  | w_h^k(y_i) - \alpha^E_k|^2  + \left( \frac h H \right) \min\{\rho_\ell,\rho_k\} \#(\YE) | \alpha_\ell^E -  \alpha_k^E |^2 \\ \lesssim
\rho_\ell\| w_h^\ell - \alpha_\ell^E \|_{L^2(E)}^2 + \rho_k \| w_h^k - \alpha_k^E \|_{L^2(E)}^2 + \min\{\rho_\ell,\rho_k\} | \alpha_\ell^E -  \alpha_k^E |^2,
\end{multline}
where we used once again \eqref{riesz1d} and the fact that, under the assumptions made on the tessellation, we have that $\#(\YE) \lesssim H/h$. Then
\[
| w_h^\sharp - v_h^\sharp |_{1/2,*}^2 \lesssim \left( \frac h H \right) \DeltaV + \DeltaE + \sum_E \sum_{\ell \in \NE} \rho_\ell \| w_h^\ell - \alpha^E_\ell \|^2_{L^2(E)}. 
\]
We conclude by observing that
\[
 \sum_E \sum_{\ell \in \NE} \rho_\ell \| w_h^\ell - \alpha^E_\ell \|^2_{L^2(E)} \lesssim \sum_{\ell} \sum_{F\in\Fl} \rho_\ell \| w_h^\ell - \alpha^F_\ell \|^2_{L^2(\partial F) }
\]
(we used that $\alpha_\ell^E$ minimizes $\| w_h^\ell - \alpha \|_{L^2(E)}$).
	Applying Proposition \ref{prop:trace} we then obtain 
\[	 \sum_E \sum_{\ell \in \NE} \rho_\ell \| w_h^\ell - \alpha^E_\ell \|^2_{L^2(E)} \lesssim (1+\log(H/h)) \| w_h - \alpha^F \|_{1/2,*}^2 \lesssim (1+\log(H/h)) | w_h  |_{1/2,*}^2.
\]	
\end{proof}

Observe that $\Delta^{\mathcal{X}}$, $\Delta^{\mathcal{E}}$, and $\Delta^{\mathcal{F}}$ vanish provided that $w_h$ belongs to $\widetilde{W}^V$, $\widetilde{W}^E$, and $\widetilde{W}^F$, respectively, so that, depending on the choice of $\widetilde W_h$, some of the terms on the right hand side of  (\ref{mainboundED}) disappear. In order to get a bound for $\ED$ for the different choices of $\widetilde W_h$, we then need to bound the remaining terms in the different cases. We start by comparing, for a given function $w_h$, the average over a face with the average over one of its edges.

\begin{lemma}\label{edge-face}
	For $E$ edge of $F\subset \partial \Omega^\ell$, it holds for $\alpha^E_\ell,\, \alpha^F_\ell$ defined in (\ref{alfaEF}): 
	\begin{equation}
|	\alpha^E_\ell-\alpha^F_\ell|^2 \lesssim  (1+\log(h/H)) |w_h^\ell |^2_{H^{1/2}(F)}.
	\end{equation}
\end{lemma}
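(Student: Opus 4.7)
The plan is to reduce everything to a trace/Poincaré argument on the face $F$, using the logarithmic trace inequality of Proposition \ref{prop:trace} to handle the passage from the face to its edge.

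First, I would set $v_h = w_h^\ell - \alpha^F_\ell$, so that by construction $\int_F v_h = 0$, and, since subtracting a constant does not affect seminorms, $|v_h|_{H^{1/2}(F)} = |w_h^\ell|_{H^{1/2}(F)}$. The quantity we have to control can be rewritten as an average of $v_h$ on $E$:
\[
\alpha^E_\ell - \alpha^F_\ell = |E|^{-1}\int_E v_h.
\]
By Cauchy--Schwarz (and the scaling $|E|\simeq H$ together with the definition \eqref{scaled1} of the scaled $L^2$ norm),
\[
|\alpha^E_\ell - \alpha^F_\ell|^2 \lesssim \|v_h\|_{L^2(E)}^2 \leq \|v_h\|_{L^2(\partial F)}^2.
\]

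Next I would apply Proposition \ref{prop:trace} to $v_h$ (which belongs to ${W_h}_{|F}$) to obtain
\[
\|v_h\|_{L^2(\partial F)}^2 \lesssim \bigl(1+\log(H/h)\bigr)\,\|v_h\|_{H^{1/2}(F)}^2.
\]
At this point I need to convert the full $H^{1/2}(F)$ norm of $v_h$ into the seminorm of $w_h^\ell$. Since $\int_F v_h = 0$, the fractional Poincaré inequality on the shape-regular polygon $F$ gives $\|v_h\|_{L^2(F)} \lesssim |v_h|_{H^{1/2}(F)}$ (with a constant independent of $H$, thanks to the scaled norms \eqref{scaled1}--\eqref{scaled4}). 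Combining this with $|v_h|_{H^{1/2}(F)} = |w_h^\ell|_{H^{1/2}(F)}$ yields $\|v_h\|_{H^{1/2}(F)}^2 \lesssim |w_h^\ell|_{H^{1/2}(F)}^2$, and chaining the inequalities gives the claim.

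The only slightly delicate step is the fractional Poincaré inequality with zero mean, but on a shape-regular face $F$ this is classical and, in the scaled norms used throughout the paper, it is uniform in the face diameter $H$. Everything else is a direct application of Proposition \ref{prop:trace} combined with the fact that $\alpha^F_\ell$ is precisely chosen to make $v_h$ have zero mean on $F$.
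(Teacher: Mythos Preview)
Your argument is correct and is essentially the same as the paper's: both subtract the face average $\alpha^F_\ell$, bound the resulting edge average by the $L^2(E)$ norm (you via Cauchy--Schwarz, the paper via the contraction property of the $L^2(E)$ projection onto constants), apply Proposition~\ref{prop:trace}, and finish with a Poincar\'e inequality for mean-zero functions on $F$. The only cosmetic difference is that the paper phrases the first step as $\|\Pi_E(w_h^\ell-\alpha^F_\ell)\|_{L^2(E)}\leq\|w_h^\ell-\alpha^F_\ell\|_{L^2(E)}$, which is exactly your Cauchy--Schwarz bound.
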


\newcommand{\Piconst}{\pi_E}
\newcommand{\PwC}{\mathcal{K}_H}
\begin{proof}
Let $\Piconst$  denote the $L^2(E)$ projection onto the space of constant functions, which is defined by 
\[
\Piconst w = |E|^{-1} \int_E w.
\]
Trivially, such an operator preserves the constants. Then, by using Proposition \ref{prop:trace} and a Poincar\'e type inequality, allowing to bound the $H^{1/2}(F)$ norm with the $H^{1/2}(F)$ seminorm for the average free function $w_h^\ell - \alpha_\ell^F$,  we can write
\begin{multline*}
| \alpha_\ell^E - \alpha_\ell^F |^2 \simeq \| \Piconst (w_h^\ell - \alpha_\ell^F) \|^2_{L^2(E)}
\leq \| w_h^\ell - \alpha_\ell^F \|^2_{L^2(E)}
\lesssim 
(1+\log(H/h)) | w_h^\ell  |^2_{H^{1/2}(F)}.
\end{multline*}
	\end{proof}

We can now bound $\Delta^{\mathcal{X}}$ in terms of either $\Delta^{\mathcal{E}}$  or $\Delta^{\mathcal{F}}$.
\begin{lemma}
	The following inequalities hold:
	\begin{eqnarray}
\label{EboundsV}
	\hH\DeltaV &\lesssim& \CjumpE  \left( (1+\log(H/h) | w_h |_{1/2,*}^2 +   \DeltaE\right),\\
	\label{FboundsV}
		\hH\DeltaV &\lesssim& \CjumpF  (1+\log(H/h) \left(| w_h |_{1/2,*}^2 + \DeltaF\right) 
	\end{eqnarray}
	with  $\CjumpE$, $\CjumpF$ 
	constants depending on the diffusion coefficient $\rho$,  satisfying 
$0 < \CjumpE\leq \CjumpF \leq \frac{ \max \rho} {\min \rho}$. 

\end{lemma}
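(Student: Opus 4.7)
The plan is to prove both inequalities by a path argument: for each cross point $y_i \in \mathcal{X}$ and each pair $\ell, k \in \mathcal{N}_i$, I want to connect $w_h^\ell(y_i)$ and $w_h^k(y_i)$ through a chain of subdomains all containing $y_i$. Under Assumption \ref{ass:macroel}, one can find (with a length bounded by a constant depending only on $\gammastar$ and $\Nstar$) a sequence $\ell=\ell_0,\ell_1,\dots,\ell_M=k$ of indices such that every consecutive pair $\Omega^{\ell_j},\Omega^{\ell_{j+1}}$ shares, for the first estimate, a macro-edge $E_j\ni y_i$, and for the second estimate, a macro-face $F_j\ni y_i$. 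Inserting intermediate averages and telescoping yields, for the edge-path,
\begin{equation*}
w_h^\ell(y_i)-w_h^k(y_i)=\sum_{j=0}^{M-1}\Bigl[(w_h^{\ell_j}(y_i)-\alpha^{E_j}_{\ell_j})+(\alpha^{E_j}_{\ell_j}-\alpha^{E_j}_{\ell_{j+1}})+(\alpha^{E_j}_{\ell_{j+1}}-w_h^{\ell_{j+1}}(y_i))\Bigr],
\end{equation*}
and analogously for face averages.

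The main analytical ingredient is to control the scalar ``self'' terms. Lemma \ref{lem:Riesz} (applied in the one-dimensional Riesz-basis form, which for piecewise linears on a quasi-uniform edge is standard) gives, for any single node $y_i\in\YE$,
\[
\hH\,|w_h^{\ell_j}(y_i)-\alpha^{E_j}_{\ell_j}|^2\;\lesssim\;\|w_h^{\ell_j}-\alpha^{E_j}_{\ell_j}\|_{L^2(E_j)}^{2}.
\]
Since $\alpha^{E_j}_{\ell_j}$ is the $L^2(E_j)$-best approximation by constants, bounding by the corresponding norm on $\partial F$ (for any face $F$ of $\Omega^{\ell_j}$ containing $E_j$) and applying Proposition \ref{prop:trace} together with a Poincar\'e-type inequality for the $H^{1/2}(F)$ norm produces
\[
\hH\,|w_h^{\ell_j}(y_i)-\alpha^{E_j}_{\ell_j}|^2\;\lesssim\;(1+\log(H/h))\,|w_h^{\ell_j}|_{H^{1/2}(F)}^2.
\]
For the face-based version, I replace $\alpha^{E_j}_{\ell_j}$ by $\alpha^{F_j}_{\ell_j}$ after the extra bridge $|\alpha^{E_j}_{\ell_j}-\alpha^{F_j}_{\ell_j}|^2\lesssim(1+\log(H/h))\,|w_h^{\ell_j}|^2_{H^{1/2}(F_j)}$ from Lemma \ref{edge-face}, which is what produces the extra $(1+\log(H/h))$ multiplying $\DeltaF$ in the second inequality.

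To handle the diffusion coefficient weights, I use the classical Toselli--Widlund path trick: for each path the ratio $\min\{\rho_\ell,\rho_k\}/\min\{\rho_{\ell_j},\rho_{\ell_{j+1}}\}$ and $\min\{\rho_\ell,\rho_k\}/\rho_{\ell_j}$ are uniformly bounded by the path-dependent constants that \emph{define} $\CjumpE$ and $\CjumpF$ (with the bound $\le\max\rho/\min\rho$ quoted in \eqref{rel:tol} and the edge--face inclusion giving $\CjumpE\le\CjumpF$). Multiplying the telescoped identity by $\min\{\rho_\ell,\rho_k\}$, squaring via Cauchy--Schwarz using that path length is bounded, and replacing the weights by $\rho_{\ell_j}$ (for self terms) and $\min\{\rho_{\ell_j},\rho_{\ell_{j+1}}\}$ (for jump terms) at the cost of $\CjumpE$ or $\CjumpF$, the self terms assemble into a factor of $(1+\log(H/h))\,|w_h|_{1/2,*}^2$ and the jump terms assemble into $\DeltaE$ or $\DeltaF$ (the extra $\hH\le 1$ on the left gives some slack).

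Finally I sum over $i\in\mathcal{X}$ and $\ell,k\in\Ni$. Since each macro-edge/face appears in only a bounded number of cross-point paths (by Assumption \ref{ass:macroel}), no further polylogarithmic cost appears in the bookkeeping. The main obstacle is twofold: first, verifying the existence of the edge-connected (resp.\ face-connected) path, which amounts to a routine topological consequence of the geometric conformity and shape-regularity of the decomposition; and second, making the weight exchange precise so that the constants $\CjumpE,\CjumpF$ appearing on the right hand side coincide exactly with those defined in the finite element FETI-DP literature cited in the paper, thus enabling a plug-in use of the statements of Theorem \ref{bound_FETI-DP}.
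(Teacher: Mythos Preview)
Your approach is essentially the same as the paper's: both proofs reduce to the adjacent case (subdomains sharing an edge, respectively a face, through $y_i$), bound the point value via the one-dimensional Riesz basis property and Proposition~\ref{prop:trace}, insert the edge/face averages to produce the $\DeltaE$/$\DeltaF$ terms, and then handle non-adjacent pairs by a bounded-length path argument, defining $\CjumpE$ and $\CjumpF$ as the maximum over pairs of $\min\{\rho_\ell,\rho_k\}$ times the optimal path weight, with $\CjumpE\le\CjumpF$ following from $\mathbf{P}^{\ell,k}_F\subseteq\mathbf{P}^{\ell,k}_E$.

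One small remark: your attribution of the extra $(1+\log(H/h))$ in front of $\DeltaF$ to the edge--face bridge from Lemma~\ref{edge-face} is not quite right --- that bridge contributes to the \emph{self} terms (hence to the $|w_h|_{1/2,*}^2$ part), not to the jump term $|\alpha^{F_j}_{\ell_j}-\alpha^{F_j}_{\ell_{j+1}}|^2$. Your argument in fact yields the slightly sharper bound $\hH\DeltaV\lesssim\CjumpF\bigl((1+\log(H/h))|w_h|_{1/2,*}^2+\DeltaF\bigr)$, which of course implies \eqref{FboundsV} as stated.
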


\newcommand{\Kpath}{N}
\newcommand{\Kstar}{K^*}
\newcommand{\cpath}{\tau}

\begin{proof} We start by proving (\ref{EboundsV}). Let $V\in \Xi$ denote one of the cross points, let $\dofi$ (with $ \in \mathcal{X}$) denote the corresponding d.o.f., and let $\ell,k \in \Ni$. Assume at first that $\Omega^\ell$ and $\Omega^k$ share an edge $E$ having $V$ as one of the vertices. Adding and subtracting $(\alpha_\ell^E - \alpha_k^E)$, using Proposition \ref{prop:trace} as well as a Poincar\'e--inequality for function with vanishing average in a portion of the boundary (allowing us to bound the $H^{1/2}$ norm with the $H^{1/2}$ seminorm), we can write
	\begin{gather*}
	\left(  \frac{h}{H} \right) \min\{\rho_\ell,\rho_k \}| w_h^\ell (V) - w_h^k(V) |^ 2 \lesssim \min\{\rho_\ell,\rho_k \}\| w_h^\ell - w_h^k \|_{L^2(E)}^2  \lesssim \\ \lesssim
	(1+\log(h/H)) (\rho_\ell | w_h^\ell |^2_{H^{1/2}(\partial\Omega^\ell )} + \rho_k | w_h^k |^ 2_{H^{1/2}(\partial\Omega^k )}    ) + \min\{\rho_\ell,\rho_k\} | \alpha_\ell^E - \alpha_k^E |^2.
	\end{gather*}
%
%

\newcommand{\Paths}{\mathcal{P}} 
\newcommand{\PathsE}{\mathbf{P}^{\ell,k}_E}
\newcommand{\PathsF}{\mathbf{P}^{\ell,k}_F}
\newcommand{\lpath}{N}
\newcommand{\maxpath}{K^*}

	Let now $\ell,k \in \Ni$ be two subdomains sharing the vertex $V$ but not an edge. In this case we bound $|w_h^\ell(V) - w_h^k(V)|$ by adding and subtracting a suitable sequence of values $w^n_h(V)$ in such a way that  we fall back into the previous case. 
To this aim we start by introducing the following definitions: 
\begin{itemize}
\item a {\it path  $\Paths$} of length $\Kpath$ is 
any sequence of subdomains $\Omega^{n_0}, \ldots, \Omega^{n_\Kpath}$ such that for all $i$, $\Omega^{n_i}$ and $\Omega^{n_{i+1}}$ share at least a vertex;
\item for a given path $\Paths = (\Omega^{n_0}, \cdots,\Omega^{n_N})$ we set $\tau_{\Paths} =(\min \rho_{n_i})^{-1}$, 
$i\in[0,\dots,N]$;
\item a  path $\Paths= (\Omega^{n_0}, \cdots,\Omega^{n_N})$  {\it connects $\Omega^\ell$ and $\Omega^k$ via edges (resp. via faces)} if $n_0 =\ell$, $n_\Kpath = k$ and for all $i = 1,\ldots,\lpath$ the subdomains $\Omega^{n_i}$ and $\Omega^{n_{i-1}}$ share an edge (resp. a face).
\end{itemize}

Letting $\maxpath$ be the maximum number of subdomains sharing a vertex, we denote by $\PathsE$ (resp. $\PathsF$) the set of paths  of length $\leq \maxpath$ connecting $\Omega^\ell$ and $\Omega^k$ via edges (resp. via faces).
For all paths $\Paths = (\Omega^{n_0}, \cdots, \Omega^\Kpath) \in \PathsE$ we can bound
\[
\min\{\rho_\ell,\rho_k \}| w_h^\ell (V) - w_h^k(V) |^ 2 \lesssim \sum_{j=1}^\Kpath 
\frac{\min\{\rho_\ell,\rho_k\}}{
	\min\{
	\rho_{n_j},\rho_{n_{j-1}}
	\}	
} \min\{
\rho_{n_j},\rho_{n_{j-1}}
\}	  | w^{n_j}_h (V) - w^{n_{j-1}}_h(V) |^2
\]	
and, using the bound for subdomains sharing an edge,  we obtain (\ref{EboundsV}) with 
\[
\CjumpE = \max_{(\ell,k): \, \Omega^\ell,\Omega^k \\ \text{ share a vertex}} \left (\min\{\rho_\ell, \rho_k\} \,\, \tau^{\ell,k}_E\right  ), 
\] 
%
%
%
where $$
\tau^{\ell,k}_E =  \min_{\Paths\in \PathsE} 
\, \tau_{\Paths}.
$$

Bound  (\ref{FboundsV}) with
$
\tau^{\ell,k}_F =  \min_{\Paths\in \PathsF} 
\, \tau_{\Paths}
$ and 
\[
\CjumpF = \max_{(\ell,k): \, \Omega^\ell,\Omega^k \text{ share a vertex}} \left (\min\{\rho_\ell, \rho_k\} \,\, \tau^{\ell,k}_F\right )
\]
%
%
%
is obtained by a similar argument. As $\PathsF \subseteq \PathsE$ (if two subdomains share a face they also share a vertex) we easily get that $\tau_E  \leq \tau_F$.
%
%
%
	\end{proof}

Finally, we bound $\Delta^{\mathcal{E}}$ and $\Delta^{\mathcal{F}}$ in terms of each other. 
\begin{lemma}
	The following bounds hold:
	\begin{eqnarray}
	\label{boundDF}
	\DeltaF &\lesssim & (1+\log(H/h))\, |w_h|_{1/2,*}^2 + \DeltaE,\\
	\label{boundDE}
	\DeltaE  &\lesssim &\CjumpEF \left( (1+\log(H/h))\, |w_h|_{1/2,*}^2 + \DeltaF \right), 
	\end{eqnarray}
		with $\CjumpEF$ a constant that satisfies $\CjumpEF \leq \CjumpF$ .
\end{lemma}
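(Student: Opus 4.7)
My plan is to prove (\ref{boundDF}) by a direct triangle inequality on any edge of the shared face, and (\ref{boundDE}) by a ``fan path'' argument around the edge, analogous to the path argument used for cross-points in the previous lemma. Both steps reduce the comparison of averages on geometric objects of different dimensions to Lemma \ref{edge-face}.

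\textbf{Proof of (\ref{boundDF}).} For a shared macro face $F=\partial\Omega^\ell\cap\partial\Omega^m$, I would pick any edge $E$ of $F$; since $E\subset\partial\Omega^\ell\cap\partial\Omega^m$, the pair $(\ell,m)$ also lies in $\NE^2$. The triangle inequality gives
\[
|\alpha^F_\ell-\alpha^F_m|^2\lesssim|\alpha^F_\ell-\alpha^E_\ell|^2+|\alpha^E_\ell-\alpha^E_m|^2+|\alpha^E_m-\alpha^F_m|^2,
\]
and, after multiplying by $\min\{\rho_\ell,\rho_m\}\le\rho_\ell,\rho_m$, Lemma \ref{edge-face} bounds the two outer terms by $(1+\log(H/h))\,\rho_\ell|w_h^\ell|^2_{H^{1/2}(F)}$ and its $m$-analog. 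Summing over $F\in\FH$ and $(\ell,m)\in\NF^2$, the outer contributions aggregate into $(1+\log(H/h))\,|w_h|^2_{1/2,*}$, while the middle term is dominated by $\DeltaE$ with only a bounded combinatorial loss, since (by Assumption \ref{ass:tessellation}) at most $\Nstar$ macro faces share a common macro edge.

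\textbf{Proof of (\ref{boundDE}).} For a macro edge $E$ and a pair $(\ell,k)\in\NE^2$, I would construct a fan path $\Omega^{n_0}=\Omega^\ell,\dots,\Omega^{n_N}=\Omega^k$ in which every $\Omega^{n_j}$ lies in $\NE$ and consecutive subdomains share a macro face $F_j$ having $E$ as one of its edges. Such a path exists because the subdomains of $\NE$ are circularly arranged around $E$, and its length $N$ is uniformly bounded by $\#\NE$ thanks to the shape regularity in Assumption \ref{ass:macroel}. A telescoping plus triangle inequality yields
\[
|\alpha^E_\ell-\alpha^E_k|^2\lesssim N\sum_{j=1}^N\bigl(|\alpha^E_{n_{j-1}}-\alpha^{F_j}_{n_{j-1}}|^2+|\alpha^{F_j}_{n_{j-1}}-\alpha^{F_j}_{n_j}|^2+|\alpha^{F_j}_{n_j}-\alpha^E_{n_j}|^2\bigr).
\]
After multiplying by $\min\{\rho_\ell,\rho_k\}$, I would absorb each of the ratios $\min\{\rho_\ell,\rho_k\}/\rho_{n_{j-1}}$, $\min\{\rho_\ell,\rho_k\}/\rho_{n_j}$ and $\min\{\rho_\ell,\rho_k\}/\min\{\rho_{n_{j-1}},\rho_{n_j}\}$ into the constant $\CjumpEF$, then apply Lemma \ref{edge-face} to the outer pieces, bounding them by $(1+\log(H/h))\,\rho_{n_j}|w_h^{n_j}|^2_{H^{1/2}(F_j)}$, and recognize each central piece as an entry of $\DeltaF$. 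Summing over $E\in\EH$ and $(\ell,k)\in\NE^2$, and noting that any fixed face $F$ appears in only $O(1)$ such fan paths, delivers (\ref{boundDE}).

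\textbf{Main obstacle.} The delicate point is the fan-path construction, which has no direct two-dimensional analogue: in 3D two subdomains may share an edge without sharing any face containing it, and this is precisely what forces the path argument and the appearance of $\CjumpEF$. The comparison $\CjumpEF\le\CjumpF$ then reduces to bookkeeping: the pairs $(\ell,k)\in\NE^2$ are a subset of the vertex-sharing pairs entering $\CjumpF$, and every fan path around $E$ is in particular an admissible face path between them, so matching the admissible paths and subdomain pairs in the two definitions yields the inequality.
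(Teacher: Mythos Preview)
Your proof is correct and follows essentially the same route as the paper: a triangle inequality through an edge of the common face plus Lemma~\ref{edge-face} for (\ref{boundDF}), and a face-path argument around the edge for (\ref{boundDE}), with $\CjumpEF\le\CjumpF$ following because edge-sharing subdomain pairs are a subset of vertex-sharing pairs while the admissible face paths are the same. One small correction: the uniform bound on the number of subdomains meeting a macro edge (which controls both your fan length and the combinatorial overcounting in the first part) comes from the shape regularity of the decomposition, Assumption~\ref{ass:macroel}, not from Assumption~\ref{ass:tessellation}, which concerns only the fine tessellation.
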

\begin{proof}
	Let us consider the first inequality.
	Let $F$ be a face common to the subdomains $\Omega^\ell$ and $\Omega^k$, and let $E$ be any of the edges of $F$. By Lemma \ref{edge-face} we have
	\begin{gather*}
	\min\{\rho_\ell,\rho_k\} |\alpha_\ell^F-\alpha_k^F|^2 \lesssim \rho_\ell |\alpha_\ell^F-\alpha_\ell^E|^2 + 
	\rho_k |\alpha_\ell^F-\alpha_\ell^E|^2 +  	\min\{\rho_\ell,\rho_k\} |\alpha_\ell^E-\alpha_k^E|^2  \\
	\lesssim  (1+\log(H/h))  \left( \rho_\ell |w_h^\ell|^2_{H^{1/2}(F)} +  \rho_k |w_h^k|^2_{H^{1/2}(F)} \right) +  	\min\{\rho_\ell,\rho_k\} |\alpha_\ell^E-\alpha_k^E|^2.
	\end{gather*}
	In view of the definition of $\DeltaF$ and $\DeltaE$, the bound \nref{boundDF} follows up adding the contribution of all faces.

	Let us now consider the second bound. Let $E$ be an edge, and let $\ell,k\in \mathcal{N}_E$. Assume at first that $\Omega^\ell$ and $\Omega^k$ share a face $F$.  
	Then, as we did for the previous bound, it is not difficult to prove that 
	$$ \min\{\rho_\ell, \rho_k\} |\alpha^E_\ell - \alpha^E_k |^2 \lesssim (1+\log(H/h)) \!
	\left( \rho_\ell |w_h^\ell |^2_{H^{1/2}(F)}  + \rho_k |w_h^k |^2_{H^{1/2}(F)} \right) +
	\min\{\rho_\ell, \rho_k\} |\alpha^F_\ell - \alpha^F_k |^2 .
	$$
	Let us now assume that $\Omega^\ell$ and $\Omega^k$ do not share a face. Proceeding as in the proof of the previous lemma it is easy to see that bound \eqref{boundDE} holds with 
\[
\CjumpEF = \max_{(\ell,k): \, \Omega^\ell,\Omega^k \\ \text{ share an edge}} \left (\min\{\rho_\ell, \rho_k\} \,\, \tau^{\ell,k}_F\right  ).
\]	
As two subdomains that share an edge also share a vertex, we have that $\CjumpEF \leq \CjumpF$.
\end{proof}

\

Now we have all the ingredients to prove Theorem \ref{bound_FETI-DP}.
From (\ref{minimal1})  we get that  to  bound  the condition number, we only need to bound $s(\ED w_h, \ED w_h)$ in terms of $s(w_h,w_h)$. 	
By using Lemma \eqref{EboundsV}, \eqref{FboundsV}, \eqref{boundDF}, and \eqref{boundDE} we can easily obtain that
\begin{align}\label{WE}
&\text{if } \widetilde W_h \subseteq\rosa{\widetilde W_h^{(E)}}, \qquad  |\ED w_h |_{1/2,*}^2 \lesssim ((1+\log(H/h))+\CjumpE) (1+\log(H/h)) | w_h |_{1/2,*}^2;  \\[3mm] 
\label{WF}
&\text{if }\widetilde W_h \subseteq\rosa{\widetilde W_h^{(F)}}, \qquad  |\ED w_h |_{1/2,*}^2 \lesssim ((1+\log(H/h))+\CjumpF) (1+\log(H/h)) | w_h |_{1/2,*}^2 ; \\[3mm]
\label{WEV}
&\text{if } \widetilde W_h \subseteq\rosa{\widetilde W_h^{(V)}}\cap\rosa{\widetilde W_h^{(E)}}, 
 \qquad | \ED w_h |_{1/2,*}^2 \lesssim (1+\log(H/h))^2 | w_h |_{1/2,*}^2; \\[3mm]
%
\label{WFV}
&\text{if } \widetilde W_h \subseteq\rosa{\widetilde W_h^{(V)}}\cap\rosa{\widetilde W_h^{(F)}},   \qquad | \ED w_h |_{1/2,*}^2 \lesssim \left ((1+\log(H/h)) +\CjumpEF\right ) (1+\log(H/h)) | w_h |_{1/2,*}^2. 
\end{align}
As, by \eqref{contcoercschur}, we have that $s(\ED w_h, \ED w_h) \simeq H | \ED w_h |_{1/2,*}^2$ and $s(w_h,w_h) \simeq H | w_h |_{1/2,*}^2$, \eqref{WE}--\eqref{WEV} yield the thesis.

\

\begin{remark} \label{equivalence_BDDC}
	Observe that, since we are in the framework of 	\cite{MandelSousedik}, we have the equivalence of the BDDC preconditioner with the FETI-DP preconditioner. Therefore the analysis presented 
	also yields an estimate on the BDDC preconditioner for the Virtual Element Method.
\end{remark}

%
%
%

%

\section{Numerical tests}\label{sec:numerical}
We consider the model problem%
\begin{equation}\label{eq:poisson}
  -\nabla\cdot(\rho \nabla u) = f\qquad \text{in }\Omega = (0,1)^3, \qquad
  u = 0\qquad \text{on }\partial\Omega. 
\end{equation}
In all experiments $\Omega$ is divided into $L = N\times N\times N$ cubic subdomains with side length $H = 1/N$, which are then each discretized by a rescaled version of a reference tessellation of the unit cube  made of either truncated octahedra or Voronoi cells (see Figure~\ref{fig:meshes}). In order to guarantee conformity, the mappings from the different subdomains to the reference unit cube are defined in such a way that each macro face $F$ is a symmetry plane for the tessellations on the two sides of $F$.  
Table~\ref{tab:meshdata} lists the values of the following geometrical parameters for the reference meshes used in the experiments:
\[
h = \max_{K\in\Th} h_K, \qquad h_\textup{min} = \min_{K\in\Th} h_{\textup{min},K}, \qquad  \gammastar = \min_{K\in\Th} \gammaK, 
\]
where $h_K$ is the diameter of element $K\in\Th$,
 $h_{\textup{min},K}$ is the minimum distance between any two vertices of $K$
 and $\gammaK$ is the parameter given in Definition ~\ref{shape_regular}.
From Table~\ref{tab:meshdata} we see that Voronoi meshes satisfy Assumption   \ref{ass:tessellation}  but with worse constants than the octahedra ones. 
For all tests the stabilization bilinear form $\Svem$ is the simplest one, defined in \eqref{stabform1}. 

\begin{figure}[htb]
\centering
\subfloat{\includegraphics[width=0.40\textwidth]{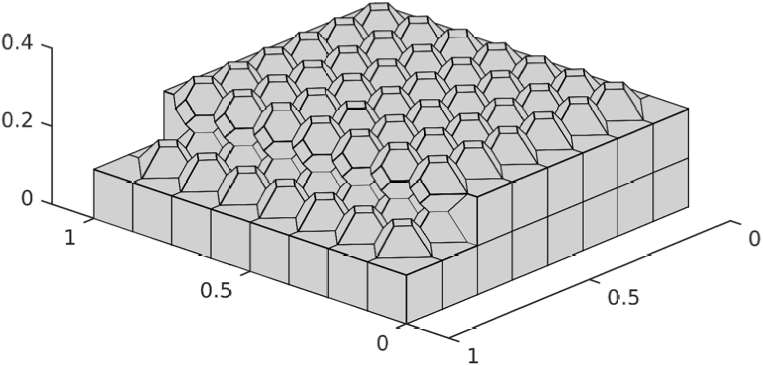}}\hspace{1cm}
\subfloat{\includegraphics[width=0.32\textwidth]{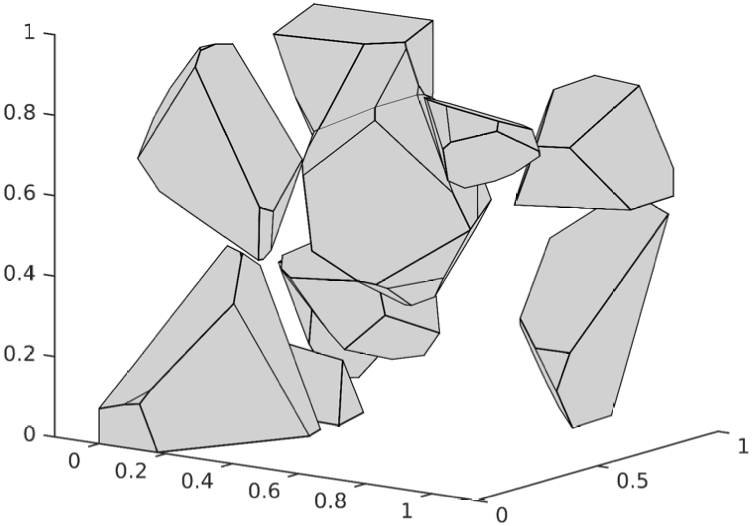}}
\caption{(Left) Clipped view of a mesh made of truncated octahedra; (right) example cells of a Voronoi mesh.}
\label{fig:meshes}
\end{figure}

\begin{table}[htb]
{
\small
  \caption{Data for the reference meshes used in the experiments: (left) meshes made of truncated octahedra; (right) meshes made of Voronoi cells.}
\label{tab:meshdata}
  \begin{center}
  \begin{tabular}{
      c
      S[table-format=1.{\roundPrecision}e-1]
      S[table-format=1.{\roundPrecision}e-1]
      S[table-format=1.{\roundPrecision}e-1]
|
      c
      S[table-format=1.{\roundPrecision}e-1]
      S[table-format=1.{\roundPrecision}e-1]
      S[table-format=1.{\roundPrecision}e-2]
    }
    \toprule
    Mesh & {$h$} & {$h_\textup{min}$} & {$\gamma_*$} &
    Mesh & {$h$} & {$h_\textup{min}$} & {$\gamma_*$}\\
    \midrule
	oct$_1$ & 4.330127e-01 & 6.250000e-02 & 6.060606e-02 & voro$_1$ & 7.159673e-01 & 2.403890e-04 & 9.079474e-07\\
    oct$_2$ & 2.886757e-01 & 4.166670e-02 & 6.060531e-02 & voro$_2$ & 5.656539e-01 & 2.479537e-04 & 4.833679e-07\\
    oct$_3$ & 2.165064e-01 & 3.125000e-02 & 6.060606e-02 & voro$_3$ & 5.369586e-01 & 1.258099e-04 & 3.476163e-08\\
    oct$_4$ & 1.732051e-01 & 2.500000e-02 & 6.060606e-02 & voro$_4$ & 3.616750e-01 & 2.602670e-05 & 6.765579e-09\\
    oct$_5$ & 1.443379e-01 & 2.083300e-02 & 6.060319e-02 & voro$_5$ & 2.806446e-01 & 5.688789e-06 & 1.916817e-08\\
    oct$_6$ & 1.237187e-01 & 1.785700e-02 & 6.060439e-02 & voro$_6$ & 2.456026e-01 & 3.116905e-06 & 3.383481e-09\\
    oct$_7$ & 1.082532e-01 & 1.562500e-02 & 6.060606e-02 & voro$_7$ & 1.896539e-01 & 1.313472e-06 & 7.239983e-11\\
    oct$_8$ & 9.622581e-02 & 1.388890e-02 & 6.060381e-02 & voro$_8$ & 1.477511e-01 & 5.927527e-08 & 5.317264e-12\\
    \bottomrule
  \end{tabular}
\end{center}}
\end{table}

\

Concerning the space $\widetilde W_h$, the following choices from Section~\ref{sec:FETI_prec} are tested:

\medskip

\begin{center}
{\sc E:} $\widetilde W_h =\rosa{\widetilde W_h^{(E)}},$ \hspace{.4cm}   {\sc F:} $\widetilde W_h =\rosa{\widetilde W_h^{(F)}},$ \hspace{.4cm}
	{\sc VE:} $\widetilde W_h = \rosa{\widetilde W_h^{(V)}} \cap\rosa{\widetilde W_h^{(E)}},$ \hspace{.4cm} {\sc VF:} $\widetilde W_h = \rosa{\widetilde W_h^{(V)}} \cap\rosa{\widetilde W_h^{(F)}}$.
\end{center}
	%

%
%

\medskip

\noindent For the sake of completeness, we also test   {\sc V:} $\widetilde W_h = \rosa{\widetilde W_h^{(V)}}$, 
 for which we expect a worse behaviour, as for the finite element case, where the condition number increases as \linebreak $(1+\log(H/h))^2(H/h)$ (see \cite{Toselli.Widlund},  Remark 6.39). 
 
 \
 
In order to analyze the performance of these algorithms, we carry out two series of experiments:

\begin{description}
	\item[Test 1:] {\em FETI-DP scalability.} 
		We increase the number of subdomains and the overall problem size, while keeping the reference mesh (and, consequently, the ratio $H/h$), fixed. Table~\ref{tab:test1-mesh} shows the dimension of the primal spaces $\widetilde W_h$ for the different algorithms. 
\noindent According to Theorem~\ref{bound_FETI-DP}, we expect the condition number for the FETI-DP preconditioner to remain asymptotically constant.\label{test1}

\item[Test 2:]  {\em FETI-DP quasi-optimality}. We fix the number of subdomains $L$, so that $H$ is kept constant, and increase the size of the local problems by choosing finer and finer reference meshes (from oct$_1$ to oct$_8$, or from voro$_1$ to voro$_8$ in Table~\ref{tab:meshdata}), thereby incrementing the overall problem size. This results in a decreasing $h$ and, consequently, increasing ratio $H/h$. 
\noindent According to Theorem~\ref{bound_FETI-DP}, we now expect the condition number for the FETI-DP preconditioner to asymptotically exhibit a polylogarithmic behavior.\label{test2}
\end{description}
%
%
%
%
%
To test the robustness of FETI-DP, each test is run with two types of data (unless otherwise stated):
\begin{itemize}
\item $\rho= 1$ in $\Omega$ and $f = \sin(2\pi x)\sin(2\pi y)\sin(2\pi z)$;
\item $\rho$ constant in each subdomain, assuming values  $\rho_1 = 10^5$ and $\rho_2 = 10^{-5}$ in a (3D) checkerboard distribution, so that there is a
coefficient jump of $10^{10}$ along $\Gamma$. The right hand side  $f$ is implicitly chosen by choosing a right hand side vector with values uniformly randomly distributed in $[-1,1]$.
\end{itemize}
Unless otherwise stated, we use the conjugate gradient with the FETI-DP Dirichlet preconditioner with zero initial guess and, as a stopping criterion, the relative reduction of the dual residual by either $10^{-6}$ or $10^{-12}$ when using the first or the second type of data, respectively. MATLAB$^\text{\textregistered}$ R2016b is used as the subdomain and coarse sparse direct solver. If not otherwise specified, results are obtained on a machine equipped with a processor Intel$^\text{\textregistered}$ Core$^\text{\texttrademark}$ i7-7820HQ, operating system Ubuntu Linux 16.04 LTS, memory 64 GB, 2400 MHz DDR4 Non-ECC SDRAM.

\begin{table}[thtb]
{
\small
\caption{Number of global and primal d.o.f.s for the different algorithms considered and the meshes of Test 1: 
	constant $H/h$, reference subdomain mesh oct$_3$ (octahedra) or vor$_5$ (Voronoi). 
}
\label{tab:test1-mesh}
	\begin{center}
	\begin{tabular}{
			S[table-format=4.0]
			S[table-format=1.{\roundPrecision}e+1]
			S[table-format=8.0]
			S[table-format=1.{\roundPrecision}e+1]
			S[table-format=8.0]
			S[table-format=4.0]
			S[table-format=4.0]
			S[table-format=4.0]
			S[table-format=4.0]
			S[table-format=4.0]
		}
			\toprule
			\multicolumn{1}{c}{\multirow{2}{*}{$L$}} 
			 & \multicolumn{2}{c}{{Octahedra}} & \multicolumn{2}{c}{{Voronoi}} & \multicolumn{5}{c}{{Coarse}} \\
			\cmidrule(lr){2-3} 	\cmidrule(lr){4-5} \cmidrule(lr){6-10} 
		         &    {$h$}        &  {d.o.f.}   &    {$h$} &  {d.o.f.} & {V} &  {E}  &  {F}  &  {VE}  & {VF} \\
			\midrule
		   8 & 1.0825e-01 &   31928 & 1.4032e-01 &    46416 &    1 &    6 &   12 &    7 &   13  \\
		  64 & 5.4127e-02 &  261960 & 7.0161e-02 &   388280 &   27 &  108 &  144 &  135 &  171 \\
		 216 & 3.6084e-02 &  892072 & 4.6774e-02 &  1330240 &  125 &  450 &  540 &  575 &  665 \\
		 512 & 2.7063e-02 & 2124248 & 3.5081e-02 &  3176952 &  343 & 1176 & 1344 & 1519 & 1687 \\
		1000 & 2.1651e-02 & 4160472 & 2.8064e-02 &  6233072 &  729 & 2430 & 2700 & 3159 & 3429 \\
		1728 & 1.8042e-02 & 7202728 & 2.3387e-02 & 10803256 & 1331 & 4356 & 4752 & 5687 & 6083 \\
		\bottomrule
	\end{tabular}
	\end{center}
	}
\end{table}

\subsection{FETI-DP scalability}



\begin{table}[htb]
{
\small
	\caption{Test 1. Truncated octahedra  meshes, $H/h = 4.6188$, and  constant coefficient $\rho=1$. Condition number estimates and iteration numbers for the different choices of $\widetilde W_h$.}
	\label{tab:test1-PiV-oct}
	\begin{center}
	\begin{tabular}{
			S[table-format=4.0]
			|
			S[table-format=2.{\roundPrecision}]
			S[table-format=2.0]
			|
			S[table-format=1.{\roundPrecision}]
			S[table-format=2.0]
			|
			S[table-format=1.{\roundPrecision}]
			S[table-format=2.0]
			|
			S[table-format=1.{\roundPrecision}]
			S[table-format=1.0]
			|
			S[table-format=1.{\roundPrecision}]
			S[table-format=2.0]
		}
		\toprule
			\multicolumn{1}{c}{\multirow{2}{*}{$L$}}  &	\multicolumn{2}{c}{  V} & \multicolumn{2}{c}{ E}  & \multicolumn{2}{c}{ F}  &  \multicolumn{2}{c}{VE}  & \multicolumn{2}{c}{VF}\\
			\cmidrule(lr){2-3} \cmidrule(lr){4-5}  	\cmidrule(lr){6-7} \cmidrule(lr){8-9} 	\cmidrule(lr){10-11} 
		 & 	{$\kappa$} & {it} & {$\kappa$} & {it} & {$\kappa$} & {it}& {$\kappa$} & {it}& {$\kappa$} & {it} \\
		\midrule
8   &   1.000000   &   1   &   1.000000   &   1   &   1.000000   &   1   &   1.000000   &   1   &   1.000000   &   1\\
64   &   3.234423   &   9   &   2.389406   &   8   &   2.331676   &   8   &   1.575555   &   7   &   2.150072   &   8\\
216   &   12.849566   &   12   &   2.671940   &   10   &   2.949196   &   10   &   1.584984   &   7   &   2.636949   &   10\\
512   &   13.748798   &   18   &   2.585374   &   10   &   3.046637   &   11   &   1.609582   &   7   &   2.567055   &   10\\
1000   &   13.923685   &   18   &   2.534773   &   10   &   3.270533   &   11   &   1.621458   &   7   &   2.702339   &   10\\
1728   &   13.981769   &   18   &   2.506587   &   10   &   3.382713   &   11   &   1.628285   &   7   &   2.748666   &   10\\
		\bottomrule
	\end{tabular}
\end{center}
}
\end{table}

\begin{table}[htb]
{
\small
	\caption{Test 1. Voronoi meshes, $H/h = 6.171687$,  and  constant coefficient $\rho=1$. Condition number estimates and iteration numbers for the different choices of $\widetilde W_h$.
		} 
\label{tab:test1-PiV-vor}
	\begin{center}
	\begin{tabular}{
				S[table-format=4.0]
				|
			S[table-format=2.{\roundPrecision}]
			S[table-format=2.0]
			|
			S[table-format=1.{\roundPrecision}]
			S[table-format=2.0]
			|
			S[table-format=2.{\roundPrecision}]
			S[table-format=2.0]
			|
			S[table-format=1.{\roundPrecision}]
			S[table-format=2.0]
			|
			S[table-format=1.{\roundPrecision}]
			S[table-format=2.0]
		}
		\toprule
			\multicolumn{1}{c}{\multirow{2}{*}{$L$}}  &	\multicolumn{2}{c}{V} & \multicolumn{2}{c}{ E}  & \multicolumn{2}{c}{ F}  &  \multicolumn{2}{c}{VE}  & \multicolumn{2}{c}{VF}\\
		\cmidrule(lr){2-3} \cmidrule(lr){4-5}  	\cmidrule(lr){6-7} \cmidrule(lr){8-9} 	\cmidrule(lr){10-11} 
		& {$\kappa$} & {it} & {$\kappa$} & {it} & {$\kappa$} & {it}& {$\kappa$} & {it}& {$\kappa$} & {it} \\
		\midrule
8   &   1.000000   &   1   &   1.000000   &   1   &   1.000000   &   1   &   1.000000   &   1   &   1.000000   &   1\\
64   &   27.881667   &   19   &   4.715470   &   15   &   6.363019   &   15   &   3.923381   &   13   &   5.506362   &   14\\
216   &   26.526605   &   25   &   6.032554   &   17   &   9.448031   &   19   &   4.158831   &   14   &   7.788726   &   17\\
512   &   29.047841   &   31   &   6.092926   &   17   &   10.537346   &   21   &   4.159487   &   14   &   8.375766   &   18\\
1000   &   29.551405   &   32   &   6.138289   &   17   &   11.030450   &   21   &   4.159843   &   14   &   8.731337   &   19\\
1728   &   30.473817   &   32   &   6.174897   &   17   &   11.433136   &   22   &   4.161037   &   14   &   8.890039   &   19\\
		\bottomrule
	\end{tabular}
	\end{center}
	}
	\end{table}


We test the scalability for both the constant coefficient case and the varying coefficients case on the lowest order method ($\p = 1$). We fix the number of subdomains to be $L = 216$ ($6 \times 6 \times 6$).
Results for the first series of experiments (Test 1) with constant
 coefficients $\rho$ 
are reported in Table~\ref{tab:test1-PiV-oct} 
for meshes of truncated octahedra, and in Table~\ref{tab:test1-PiV-vor} 
for Voronoi meshes. 
The results are in accordance with the theoretical bounds  for both sets of meshes. 
Tables \ref{tab:test1-PiV-oct} and \ref{tab:test1-PiV-vor} 
	show that, when no jumps in the coefficients are present, the results for {\sc E, F, VE},  and {\sc VF} are similar. 
In switching from the octahedra to the Voronoi mesh, which satisfies Assumption \ref{ass:tessellation} but with worse constants, we observe an increase in the number of iterations and in the condition number which, however, still displays the expected behavior. 
As one expects, the choice {\sc V} is, instead,  not competitive.

The numerical tests  with varying coefficients $\rho$  are displayed in  Tables~\ref{tab:test1-PiV-oct-rho} and \ref{tab:test1-PiV-vor-rho}
for octahedra and Voronoi meshes,  respectively.
With highly oscillating coefficients, when only face averages are selected as primal degrees of freedom (Algorithm {\sc F}), the preconditioner performs very poorly on both types of meshes.

\begin{table}[thtb]
{
\small
	\caption{
	Test 1. Truncated octahedra meshes,  $H/h$ constant, checkerboard $(\rho_1 = 10^5, \rho_2 = 10^{-5})$.
	Condition number estimates and iteration numbers for the different choices of $\widetilde W_h$.
 }
\label{tab:test1-PiV-oct-rho}
	\begin{center}
	\begin{tabular}{
				S[table-format=4.0]
				|
			S[table-format=2.{\roundPrecision}]
			S[table-format=2.0]
			|
			S[table-format=1.{\roundPrecision}]
			S[table-format=1.0]
			|
			S[table-format=1.{\roundPrecision}e+1]
			S[table-format=3.0]
			|
			S[table-format=1.{\roundPrecision}]
			S[table-format=1.0]
			|
			S[table-format=2.{\roundPrecision}]
			S[table-format=2.0]
		}
		\toprule
			\multicolumn{1}{c}{\multirow{2}{*}{$L$}} &	\multicolumn{2}{c}{V} & \multicolumn{2}{c}{ E}  & \multicolumn{2}{c}{ F}  &  \multicolumn{2}{c}{VE}  & \multicolumn{2}{c}{VF}\\
		\cmidrule(lr){2-3} \cmidrule(lr){4-5}  	\cmidrule(lr){6-7} \cmidrule(lr){8-9} 	\cmidrule(lr){10-11} 
		& {$\kappa$} & {it} & {$\kappa$} & {it} & {$\kappa$} & {it}& {$\kappa$} & {it}& {$\kappa$} & {it} \\
8   &   1.868217   &   6   &   1.745221   &   5   &   2.733039e+00   &   6   &   1.247625   &   5   &   2.168346   &   6\\
64   &   10.332885   &   13   &   2.572450   &   8   &   4.482920e+09   &   31   &   1.278404   &   5   &   11.691287   &   11\\
216   &   10.357991   &   15   &   2.528796   &   8   &   5.586315e+09   &   88   &   1.282275   &   5   &   12.615278   &   15\\
512   &   10.348820   &   15   &   2.540652   &   8   &   6.061157e+09   &   118   &   1.288959   &   5   &   13.373846   &   16\\
1000   &   10.360149   &   15   &   2.539138   &   8   &   6.294364e+09   &   132   &   1.288889   &   5   &   13.280532   &   16\\
1728   &   10.367114   &   15   &   2.544949   &   8   &   6.438213e+09   &   138   &   1.292265   &   5   &   13.168730   &   16\\
\bottomrule
	\end{tabular}
	\end{center}
	}
\end{table}

\begin{table}[thtb]
{\small
	\caption{Test 1. Voronoi meshes,  $H/h$ constant, checkerboard $(\rho_1 = 10^5, \rho_2 = 10^{-5})$. 
	Condition number estimates and iteration numbers for the different choices of $\widetilde W_h$.
}
\label{tab:test1-PiV-vor-rho}
	\begin{center}
	\begin{tabular}{
				S[table-format=4.0]
				|
			S[table-format=2.{\roundPrecision}]
			S[table-format=2.0]
			|
			S[table-format=1.{\roundPrecision}]
			S[table-format=2.0]
			|
			S[table-format=1.{\roundPrecision}e+1]
			S[table-format=3.0]
			|
			S[table-format=1.{\roundPrecision}]
			S[table-format=1.0]
			|
			S[table-format=2.{\roundPrecision}]
			S[table-format=2.0]
		}
		\toprule
			\multicolumn{1}{c}{\multirow{2}{*}{$L$}} &	\multicolumn{2}{c}{V} & \multicolumn{2}{c}{ E}  & \multicolumn{2}{c}{ F}  &  \multicolumn{2}{c}{VE}  & \multicolumn{2}{c}{VF}\\
		\cmidrule(lr){2-3} \cmidrule(lr){4-5}  	\cmidrule(lr){6-7} \cmidrule(lr){8-9} 	\cmidrule(lr){10-11} 
		& {$\kappa$} & {it} & {$\kappa$} & {it} & {$\kappa$} & {it}& {$\kappa$} & {it}& {$\kappa$} & {it} \\
		\midrule
8   &   4.950380   &   11   &   3.597147   &   9   &   6.742533e+00   &   11   &   3.197408   &   8   &   5.707291   &   11\\
64   &   23.731815   &   20   &   6.578026   &   13   &   1.136903e+10   &   71   &   3.551812   &   9   &   32.300596   &   19\\
216   &   25.534518   &   26   &   6.694528   &   13   &   1.450543e+10   &   175   &   3.583376   &   9   &   35.944505   &   27\\
512   &   25.730977   &   27   &   6.702935   &   13   &   1.573321e+10   &   228   &   3.581468   &   9   &   36.966806   &   28\\
1000   &   25.873318   &   27   &   6.682435   &   13   &   1.651172e+10   &   254   &   3.574793   &   9   &   37.812112   &   29\\
1728   &   25.914747   &   27   &   6.688536   &   13   &   1.694812e+10   &   269   &   3.577582   &   9   &   38.107560   &   29\\
		\bottomrule
	\end{tabular}
	\end{center}
	}
\end{table}

This is not in contradiction with our theoretical results. Indeed, with coefficient jumps of $10^{10}$ in a checkerboard distribution, we have $\CjumpE = 1, \CjumpF = \num[retain-unity-mantissa=false]{1e10}$, and $\CjumpEF = \num[retain-unity-mantissa=false]{1e10}$, in agreement with the better performance of {\sc E} with respect to {\sc F} and {\sc VF}, as shown in Tables~\ref{tab:test1-PiV-oct-rho},~\ref{tab:test1-PiV-vor-rho}.
The independence from the jumps of the coefficients is shown by {\sc VE} as predicted by the bound of Theorem~\ref{bound_FETI-DP}.

\subsection{FETI-DP quasi-optimality} We test  quasi optimality for both the constant coefficient case and the varying coefficients case, again on the lowest order method ($\p = 1$).
Results for our second set of runs (Test 2) with both smooth and random data are shown in Figures~\ref{fig:test2-oct}
for meshes of truncated octahedra, and in Figures~\ref{fig:test2-randvor-1-u} 
for Voronoi meshes. In both cases, the results are in agreement with the  estimates of Theorem~\ref{bound_FETI-DP}. In particular, the experiments show that FETI-DP achieves quasi-optimality for our model problem if {\sc VE} is used, i.e. $\widetilde W_h \subset \rosa{\widetilde W_h^{(V)}} \cap\rosa{\widetilde W_h^{(E)}}$. 

\begin{figure}[htb]
	\centering
	\begin{tabular}{rl}
		\begin{tikzpicture}[trim axis right]
		\begin{semilogxaxis}[legend style={font=\scriptsize}, small,  xlabel=Dof,ylabel=$\kappa^{1/2}$,width=0.40\textwidth,legend pos=north west ]
				\addplot [thick, color=orange, mark=o, solid] coordinates {
(74440, 2.15793)
(328600, 2.90632)
(892072, 3.58463)
(1889272, 4.25349)
(3444616, 4.98508)
(5682520, 5.61673)
(8727400, 6.34446)
(12703672, 6.76584)
		};
		\addlegendentry{V}
		\addplot [thick, color=red, mark=square, solid] coordinates {
(74440, 1.47468)
(328600, 1.58152)
(892072, 1.63461)
(1889272, 1.66418)
(3444616, 1.69988)
(5682520, 1.7158)
(8727400, 1.72066)
(12703672, 1.74449)
		};
		\addlegendentry{E}
		\addplot [thick, color=blue, mark=+, solid] coordinates {
(74440, 1.96196)
(328600, 1.80654)
(892072, 1.71732)
(1889272, 1.66845)
(3444616, 1.75465)
(5682520, 1.80283)
(8727400, 1.84309)
(12703672, 1.89678)
		};
		\addlegendentry{F}	
		\addplot [thick, color=green, mark=x, solid] coordinates {
(74440, 1.04676)
(328600, 1.17493)
(892072, 1.25896)
(1889272, 1.33114)
(3444616, 1.38431)
(5682520, 1.43929)
(8727400, 1.48232)
(12703672, 1.52612)
		};
		\addlegendentry{VE}
		\addplot [thick, color=magenta, mark=*, solid] coordinates {
(74440, 1.74771)
(328600, 1.63181)
(892072, 1.62387)
(1889272, 1.67918)
(3444616, 1.69053)
(5682520, 1.70656)
(8727400, 1.7796)
(12703672, 1.80998)
		};
		\addlegendentry{VF}
		\end{semilogxaxis}
		\end{tikzpicture}\ \hspace{1.cm} \
		\begin{tikzpicture}[trim axis right]
		\begin{semilogxaxis}[legend style={font=\scriptsize}, small,   xlabel=Dof,ylabel=$\kappa^{1/2}$,width=0.40\textwidth,legend pos=north west ]
\addplot [thick, color=orange, mark=o, mark options=solid, dashed] coordinates {
(74440, 2.06356)
(328600, 2.69584)
(892072, 3.21838)
(1889272, 3.70415)
(3444616, 4.13003)
(5682520, 4.50954)
(8727400, 4.87324)
(12703672, 5.20244)
		};    
		\addlegendentry{V}
		\addplot [thick, color=red, mark=square,mark options=solid, dashed] coordinates {
(74440, 1.48468)
(328600, 1.52197)
(892072, 1.59022)
(1889272, 1.62828)
(3444616, 1.68509)
(5682520, 1.70372)
(8727400, 1.77384)
(12703672, 1.78336)
		};
		\addlegendentry{E}
		\addplot [thick, color=green, mark=x,mark options=solid, dashed] coordinates {
(74440, 1)
(328600, 1.10597)
(892072, 1.13238)
(1889272, 1.22909)
(3444616, 1.32178)
(5682520, 1.34928)
(8727400, 1.36862)
(12703672, 1.43607)
		};
		\addlegendentry{VE}
		\addplot [thick, color=magenta, mark=*, mark options=solid,dashed] coordinates {
(74440, 2.72668)
(328600, 3.06725)
(892072, 3.5518)
(1889272, 3.97952)
(3444616, 4.33268)
(5682520, 4.70153)
(8727400, 5.06872)
(12703672, 5.33545)
		};
		\addlegendentry{VF}
		\end{semilogxaxis}
		\end{tikzpicture}

	\end{tabular}
	\caption{Test 2 - Truncated octahedra meshes. 
	Constant coefficients $\rho=1$ (feft)  and  varying coefficients, checkerboard $(\rho_1 = 10^5, \rho_2 = 10^{-5})$
	(right). 
		Plots of $\kappa^{1/2}$ as a function of the global degrees of freedom 
		 for the different choices of $\widetilde W_h$.
	}
	\label{fig:test2-oct}
\end{figure}

\begin{figure}[htb]		
	\centering
	\begin{tabular}{rl}
		\begin{tikzpicture}[trim axis right]
		\begin{semilogxaxis}[legend style={font=\scriptsize}, small,  xlabel=Dof,ylabel=$\kappa^{1/2}$,width=0.40\textwidth,legend pos=north west ]
				\addplot [thick, color=orange, mark=o, solid] coordinates {
			(68896, 2.84303)
			(148672, 3.195)
			(311416, 3.47658)
			(647848, 4.50879)
			(1330240, 5.12941)
			(2751304, 6.3283)
			(5573872, 7.50318)
			(11318584, 8.85401)
		};
		\addlegendentry{V}
		\addplot [thick, color=red, mark=square, solid] coordinates {
			(68896, 1.94483)
			(148672, 2.34104)
			(311416, 2.22839)
			(647848, 2.334)
			(1330240, 2.45613)
			(2751304, 2.74014)
			(5573872, 2.71532)
			(11318584, 2.77114)
		};
		\addlegendentry{E}
		\addplot [thick, color=blue, mark=+, solid] coordinates {
			(68896, 3.1525)
			(148672, 2.95106)
			(311416, 2.85512)
			(647848, 3.5232)
			(1330240, 3.07376)
			(2751304, 3.71473)
			(5573872, 2.88096)
			(11318584, 2.89638)
		};
		\addlegendentry{F}	
		\addplot [thick, color=green, mark=x, solid] coordinates {
			(68896, 1.53686)
			(148672, 1.5524)
			(311416, 1.77533)
			(647848, 1.98572)
			(1330240, 2.03932)
			(2751304, 2.17868)
			(5573872, 2.29989)
			(11318584, 2.35659)
		};
		\addlegendentry{VE}
		\addplot [thick, color=magenta, mark=*, solid] coordinates {
			(68896, 2.61483)
			(148672, 2.40472)
			(311416, 2.31918)
			(647848, 3.18592)
			(1330240, 2.79083)
			(2751304, 3.33854)
			(5573872, 2.65749)
			(11318584, 2.6737)
		};
		\addlegendentry{VF}
		\end{semilogxaxis}
		\end{tikzpicture} \ \hspace{1.cm}  \
			\begin{tikzpicture}[trim axis right]
			\begin{semilogxaxis}[legend style={font=\scriptsize}, small, xlabel=Dof,ylabel=$\kappa^{1/2}$,width=0.40\textwidth,legend pos=north west ]
				\addplot [thick, color=orange, mark=o, mark options=solid, dashed] coordinates {
			(68896, 2.80349)
			(148672, 3.14953)
			(311416, 3.61914)
			(647848, 4.31615)
			(1330240, 5.03716)
			(2751304, 5.69247)
			(5573872, 6.59732)
			(11318584, 7.69389)
		};    
		\addlegendentry{V}
		\addplot [thick, color=red, mark=square,mark options=solid, dashed] coordinates {
			(68896, 1.92252)
			(148672, 2.21223)
			(311416, 2.03273)
			(647848, 2.24789)
			(1330240, 2.58314)
			(2751304, 2.69077)
			(5573872, 2.64118)
			(11318584, 2.66362)
		};
		\addlegendentry{E}
		\addplot [thick, color=green, mark=x,mark options=solid, dashed] coordinates {
			(68896, 1.17765)
			(148672, 1.33033)
			(311416, 1.38763)
			(647848, 1.62744)
			(1330240, 1.89039)
			(2751304, 1.98277)
			(5573872, 2.12839)
			(11318584, 2.28268)
		};
		\addlegendentry{VE}
		\addplot [thick, color=magenta, mark=*, mark options=solid,dashed] coordinates {
			(68896, 4.07515)
			(148672, 4.07661)
			(311416, 4.50726)
			(647848, 6.17544)
			(1330240, 5.99991)
			(2751304, 8.1438)
			(5573872, 7.16402)
			(11318584, 8.07248)
		};
		\addlegendentry{VF}
		\end{semilogxaxis}
		\end{tikzpicture}
	\end{tabular}
	\caption{Test 2 - Voronoi meshes. Constant coefficients $\rho=1$ (left) and checkerboard $(\rho_1 = 10^5, \rho_2 = 10^{-5})$
	(right). 
		Plots of $\kappa^{1/2}$ as a function of the global degrees of freedom 
		for 
		different choices of $\widetilde W_h$.
	}
	\label{fig:test2-randvor-1-u}
\end{figure}

We conclude by a first test of the FETI-DP preconditioner on order $\p$ VEM, for $\p \not=1$ (more extensive testing will be carried out in the future). More precisely, for  $\p =1,2,3$ (we include the lowest order case for comparison), we test   the most efficient choice of  primal space, namely, Algorithm {\sc VE}, with, once again, two types of data:  constant coefficients $\rho=1$ 
and 
the 3D checkerboard coefficient distribution $(\rho_1 = 10^5, \rho_2 = 10^{-5})$. 
For this last test, the forcing term and Dirichlet boundary condition on $\partial\Omega$ are chosen so that 
the exact solution is  $u(x,y,z) = 1/(12\pi^2)\cos(2\pi x)\cos(2\pi y)\cos(2\pi z)$. The basis $\BasePoly_\p$ is chosen as the rescaled monomial basis and for the stabilization we use the diagonal recipe \eqref{stabform2}.

For this test the stopping criterion is the relative reduction of the dual residual by $10^{-6}$. In a C implementation, the PETSc built-in LU solver is used as subdomain solver, whereas we use GMRES with absolute convergence tolerance of $10^{-14}$ as the coarse solver. These last experiments are run on the EOS cluster of the University of Pavia (\url{http://matematica.unipv.it/it/cluster-eos}). We used nodes equipped with processors Intel(R) Xeon(R) Gold 6130 CPU @ 2.10 GHz, operating system GNU/Linux 3.10.0-957.1.3.el7.x86\_64.

Table~\ref{mesh_high_order} lists  the values of the geometrical parameters $h,h_{\min},  \gammastar$ for the reference meshes used in the experiments,  where Npol  denotes the number of polyhedra. Table~\ref{high_order_ro1_L64} shows, for the case of $\rho$ constant,  the number of iterations and the estimated condition number for the FETI-DP preconditioned system with a fixed number of subdomains ($L =  64$)
but varying both the polynomial degree $\p=1,2,3$ 
and  the reference mesh. 
The numerical tests with a 3D checkerboard 
distribution of $\rho$ are displayed in Table \ref{high_order_ro_L64}. 
Focusing, for $\p$ fixed, on the number of iterations rather that on the condition numbers (which are only estimated), we observe that the results are in agreement with the theory. As is to be expected, the performance deteriorates as $\p$ increases. Indeed, several of the inequalities on which the theoretical estimate relies on are (possibly heavily) depending on $\p$. In particular it is well known that, as $\p$ increases, the choice of the stabilization bilinear form $\Svem$ and of the face and interior degrees of freedom
become crucial for the performance of the method \cite{Mascotto_illcond,dassi_mascotto_3DVEM}. It is out of the scope of this paper to study the dependence of the performance of the preconditioner on such choices, and on the order $\p$ of the method, a question that we plan to address in the future.

\



%


\begin{table}
    \centering
       \caption{Reference meshes made of random Voronoi cells used in the experiments.}
       \label{mesh_high_order}
    \begin{tabular}{
    c
    S[table-format=3.0]
    S[table-format=1.{\roundPrecision}e-1]
    S[table-format=1.{\roundPrecision}e-1]
    S[table-format=1.{\roundPrecision}e-1]
    }
    \toprule
    Mesh & {Npol} & {$h$} & {$h_\textup{min}$} & {$\gamma_*$}\\
    \midrule
    voro-b$_1$ & 64 & 7.159673e-01 & 1.849483e-05 & 9.079474e-07\\
    voro-b$_2$ & 128 & 5.656539e-01 & 4.309501e-05 & 4.833679e-07\\
    voro-b$_3$ & 256 & 5.369586e-01 & 4.309501e-05 & 3.476163e-08\\
    voro-b$_4$ & 512 & 3.616750e-01 & 2.602670e-05 & 6.765579e-09\\
    \bottomrule
    \end{tabular}
\end{table}

\begin{table}
    \centering\small
 \caption{Number of subdomains $L = 4\times4\times4 = 64$, 
  diffusion coefficient $\rho = 1$.}
   \label{high_order_ro1_L64}
    \begin{tabular}{
    S[table-format=1.{\roundPrecision}e-1]
    S[table-format=6.0]
    S[table-format=1.{\roundPrecision}]
    S[table-format=2.0]
    S[table-format=6.0]
    S[table-format=2.{\roundPrecision}]
    S[table-format=2.0]
    S[table-format=7.0]
    S[table-format=2.{\roundPrecision}]
    S[table-format=2.0]
    }
    \toprule
    {\multirow{2}{*}{$h~\backslash~\p$}} & \multicolumn{3}{c}{$1$} & \multicolumn{3}{c}{$2$} & \multicolumn{3}{c}{$3$}  \\
    \cmidrule(lr){2-4} \cmidrule(lr){5-7} \cmidrule(lr){8-10}  
    & {d.o.f.} & {$\kappa$} & {it} & {d.o.f.} & {$\kappa$} & {it} & {d.o.f.} & {$\kappa$} & {it}  \\
    \midrule
    1.789918e-01 & 15587 & 2.299494538713966 & 8 & 76263 & 6.801359737275946 & 17 & 167675 & 12.45431045133344 & 24  \\
    1.414135e-01 & 36299 & 2.517613942822233 & 9 & 171063 & 7.386998568137607 & 19 & 371443 & 12.94023227678404 & 26  \\
    1.342397e-01 & 80651 & 2.491146489945803 & 10 & 369591 & 9.747558956441544 & 21 & 795443 & 21.72570456358937 & 30  \\
    9.041875e-02 & 172467 & 3.574232337317744 & 12 & 780135 & 15.12409611077458 & 26 & 1670939 & 26.53024056669756 & 34 \\
    \bottomrule
    \end{tabular}
 \end{table}

\begin{table}
    \centering\small
  \caption{Number of subdomains $L = 4\times4\times4 = 64$, 
  checkerboard $(\rho_1 = 10^5, \rho_2 = 10^{-5})$.}
   \label{high_order_ro_L64}
    \begin{tabular}{
    S[table-format=1.{\roundPrecision}e-1]
    S[table-format=6.0]
    S[table-format=1.{\roundPrecision}]
    S[table-format=2.0]
    S[table-format=6.0]
    S[table-format=2.{\roundPrecision}]
    S[table-format=2.0]
    S[table-format=7.0]
    S[table-format=2.{\roundPrecision}]
    S[table-format=2.0]
    S[table-format=7.0]
    S[table-format=2.{\roundPrecision}]
    S[table-format=2.0]
    }
    \toprule
    {\multirow{2}{*}{$h~\backslash~\p$}} & \multicolumn{3}{c}{$1$} & \multicolumn{3}{c}{$2$} & \multicolumn{3}{c}{$3$}  \\
    \cmidrule(lr){2-4} \cmidrule(lr){5-7} \cmidrule(lr){8-10} 
    & {d.o.f.} & {$\kappa$} & {it} & {d.o.f.} & {$\kappa$} & {it} & {d.o.f.} & {$\kappa$} & {it}  \\
    \midrule
    1.789918e-01 & 15587 & 1.741302383897261 & 8 & 76263 & 5.307941331651047 & 20 & 167675 & 10.34850678998972 & 26  \\
    1.414135e-01 & 36299 & 2.147411790774467 & 11 & 171063 & 7.302703845935320 & 23 & 371443 & 14.41474705258243 & 29  \\
    1.342397e-01 & 80651 & 2.368159745782593 & 11 & 369591 & 8.467694867298711 & 24 & 795443 & 15.62072206837484 & 30  \\
    9.041875e-02 & 172467 & 3.130723228654026 & 13 & 780135 & 8.198960414018742 & 26 & 1670939 & 13.15871299711487 & 30  \\
    \bottomrule
    \end{tabular}
   \end{table}

\bibliographystyle{siamplain}

\end{document}